\providecommand{\R}{\mathbb{R}}
\providecommand{\p}{\mathbb{P}^2}
\providecommand{\face}[1]{\textit{#1}}
\providecommand{\ident}{\leftrightarrow}
\providecommand{\Link}{\textrm{\em Link}}
\providecommand{\simtimes}{\mathbin{%
  \stackrel{\sim}{\smash{\times}\rule{0pt}{0.6ex}}%
}}
\theoremstyle{definition}
\newtheorem{theorem}{Theorem}
\newtheorem{lemma}[theorem]{Lemma}
\newtheorem{corollary}[theorem]{Corollary}
\newtheorem{definition}[theorem]{Definition}
\newtheorem{example}[theorem]{Example}
\newtheorem{notation}[theorem]{Notation}
\newtheorem{improvement}[theorem]{Improvement}
\theoremstyle{construction}
\newtheorem{con}[theorem]{Construction}
\newcounter{theorem-store}
\newcounter{edge-reverse-store}
\newcounter{edge-count-store}
\title{An edge-based framework for enumerating 3-manifold triangulations%
    \footnote{Partially supported by the Australian Research Council
    (projects DP1094516, DP110101104).}}
\author{Benjamin A. Burton\\
School of Mathematics and Physics, The University of Queensland,\\
  Brisbane QLD 4072, Australia\\
  \texttt{bab@maths.uq.edu.au} \and
William Pettersson\\
School of Mathematics and Physics, The University of Queensland,\\
  Brisbane QLD 4072, Australia\\
  \texttt{william@ewpettersson.se}}
\begin{document}

\maketitle

\begin{abstract}
A typical census of 3-manifolds contains all manifolds (under various
constraints) that can be triangulated with at most $n$ tetrahedra. Although
censuses are useful resources for mathematicians, constructing them is
difficult: the best algorithms to date have not gone beyond $n=12$. The
underlying algorithms essentially (i) enumerate all relevant 4-regular
multigraphs on n nodes, and then (ii) for each multigraph $G$ they enumerate
possible 3-manifold triangulations with $G$ as their dual 1-skeleton, of which
there could be exponentially many. In practice, a small number of multigraphs
often dominate the running times of census algorithms: for example, in a
typical census on 10 tetrahedra, almost half of the running time is spent on
just 0.3\% of the graphs. 

Here we present a new algorithm for stage (ii), which is the computational
bottleneck in this process. The key idea is to build triangulations by
recursively constructing neighbourhoods of edges, in contrast to traditional
algorithms which recursively glue together pairs of tetrahedron faces. We
implement this algorithm, and find experimentally that whilst the overall
performance is mixed, the new algorithm runs significantly faster on those
``pathological'' multigraphs for which existing methods are extremely slow. In
this way the old and new algorithms complement one another, and together can
yield significant performance improvements over either method alone.
\end{abstract}

\section{Introduction}
In many fields of mathematics, one can often learn much by studying an exhaustive ``census'' of certain objects,
such as knot dictionaries.
Our focus here is on censuses of closed 3-manifolds---essentially topological spaces that locally look like $\R^3$.
Combinatorially, any closed 3-manifold can be represented by a \emph{triangulation}, formed from tetrahedra with faces identified together in pairs
\cite{Moise1952}.  A typical census of 3-manifolds enumerates all 3-manifold under certain conditions that can be constructed from a fixed
number of tetrahedra.

One of the earliest such results was a census of all cusped hyperbolic 3-manifolds which could be built from at most 5 tetrahedra, by Hildebrand and Weeks \cite{Hildebrand1989};
this was later extended to all such manifolds on at most 9 tetrahedra \cite{Burton2014Cusped,Callahan1999,thistlethwaite10-cusped8}.
For closed orientable 3-manifolds, Matveev gave the first census of closed orientable prime manifolds on up to 6 tetrahedra \cite{Matveev1998};
this has since been extended to 12 tetrahedra \cite{Martelli2001,Matveev2007AlgorithmicTopology}.

Most (if not all) census algorithms in the literature enumerate 3-manifolds on $n$ tetrahedra in two main stages.
The first stage is to generate a list of all 4-regular multigraphs on $n$ nodes. 
The second stage takes each such graph $G$, and sequentially identifies faces of tetrahedra together to form a triangulation with $G$ as its dual 1-skeleton (for a highly tuned implementation of such an algorithm, see \cite{Regina}).

There are $|S_3|=6$ possible maps to use for each such identification of faces.
Thus for each graph $G$, the algorithm searches through an exponential (in the number of tetrahedra) search tree, and each leaf in this tree is a triangulation but is not necessarily a 3-manifold triangulation.
Much research has focused on trimming this search tree down by identifying and pruning subtrees which only contain triangulations which are not
3-manifold triangulations \cite{Burton2007,burton11-genus,martelli02-decomp,Matveev1998}.

In this paper we describe a different approach to generating a census of 3-manifolds.
The first stage remains the same, but in the second stage we build up the neighbourhood of each \emph{edge} in the triangulation recursively,
instead of joining together faces one at a time.
This is, in a sense, a paradigm shift in census enumeration,
and as a result it generates significantly different search trees with very different opportunities for pruning.
By implementing the new algorithm and comparing its performance against existing algorithms,
we find that this new search framework complements existing algorithms very well,
and we predict that a heuristic combination that combines the benefits of this with existing algorithms can significantly speed up census enumeration.

The key idea behind this new search framework is to
extend each possible dual 1-skeleton graph to a ``fattened face pairing graph'', and then to find particular cycle-based decompositions of these new graphs.
We also show how various improvements to typical census algorithms (such as those in \cite{Burton2004}) can be translated into this new setting.

\section{Definitions and notation}\label{sec:notation}

In combinatorial topology versus graph theory, the terms ``edge'' and ``vertex'' have distinct meanings.
Therefore in this paper, the terms {\em edge} and {\em vertex} will be used to
mean an edge or vertex of a tetrahedron, triangulation or manifold; and the terms {\em arc}
and {\em node} will be used to mean an edge or vertex in a graph respectively. 

A 3-manifold is a topological space that locally looks like either 3-dimensional Euclidean space (i.e., $\R^3$) or closed 3-dimensional Euclidean half-space (i.e., $\R^3_{z\geq 0}$).
In this paper we only consider compact and connected 3-manifolds.
When we refer to faces, we are explicitly talking about 2-faces (i.e., facets of a tetrahedron).
We represent 3-manifolds combinatorially as triangulations \cite{Moise1952}: a collection of tetrahedra (3-simplices) with some 2-faces pairwise identified.

\begin{definition}
  A {\em general triangulation} is a collection
  $\Delta_1,\Delta_2,\ldots,\Delta_n$ of $n$ abstract tetrahedra, along with
  some bijections $\pi_1,\pi_2,\ldots,\pi_m$ where each bijection 
  $\pi_i$ is an affine map between two faces of tetrahedra, and each face of
  each tetrahedron is in at most one such bijection.
\end{definition}

We call these affine bijections {\em face identifications} or simply {\em identifications}.
Note that this is more general than a simplicial complex (e.g., we allow an identification between two distinct faces of the same tetrahedron).
If the quotient space of such a triangulation is a 3-manifold, we will say that the triangulation represents said 3-manifold.

\begin{notation}
Given a tetrahedron with vertices $a$, $b$, $c$ and $d$, we will define face $a$
to be the face opposite vertex $a$. That is, face $a$ is the face consisting of
vertices $b$, $c$ and $d$. We will sometimes also refer to this as face \face{bcd}.
We will write $\face{abc} \ident \face{efg}$ to mean that face \face{abc} is identified
with face \face{efg} and that in this identification we have vertex
$a$ identified with vertex $e$, vertex $b$ identified with vertex $f$ and
vertex $c$ identified with vertex $g$.

We will also use the notation $ab$ to denote the edge joining vertices $a$ and
$b$ on some tetrahedron. Note that by this notation, the edge $ab$ on a
tetrahedron with vertices labelled $a$, $b$, $c$ and $d$ will be the
intersection of faces $c$ and $d$.
\end{notation}

As a result of the identification of various faces, some edges or
vertices of various tetrahedra are identified together.
The {\em degree} of an edge of the triangulation, denoted $\deg(e)$, is defined to be the number of edges of tetrahedra which are identified together to form the edge of the triangulation.

We also need to define the {\em link} of a vertex before we can discuss 3-manifold triangulations.
An example of a general triangulation, and a link of one of its vertices, is given in Example \ref{ex:gt} below.

\begin{definition}\label{definition:link}
  Given a vertex $v$ in some triangulation, the link of $v$, denoted $\Link(v)$,
  is the (2-dimensional) frontier of a small regular neighbourhood of $v$.
\end{definition}

\begin{example}\label{ex:gt}
  \begin{figure}[h]
  \centering
  \subfloat[]{\label{fig:gt_a}\includegraphics[scale=0.6]{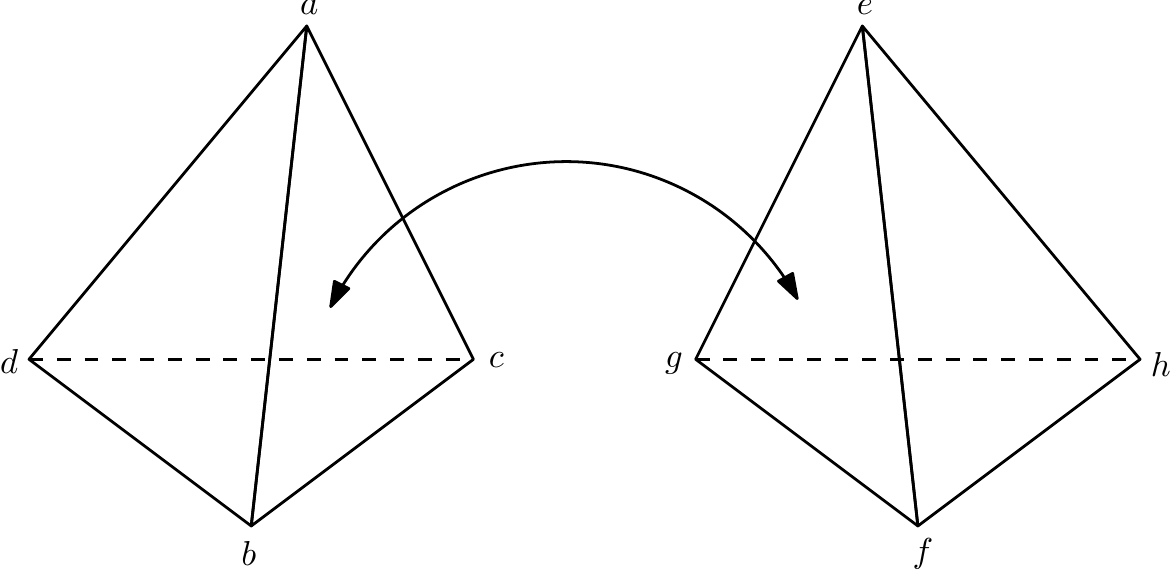}}
  \qquad
  \qquad
  \subfloat[]{\label{fig:gt_b}\includegraphics[scale=0.6]{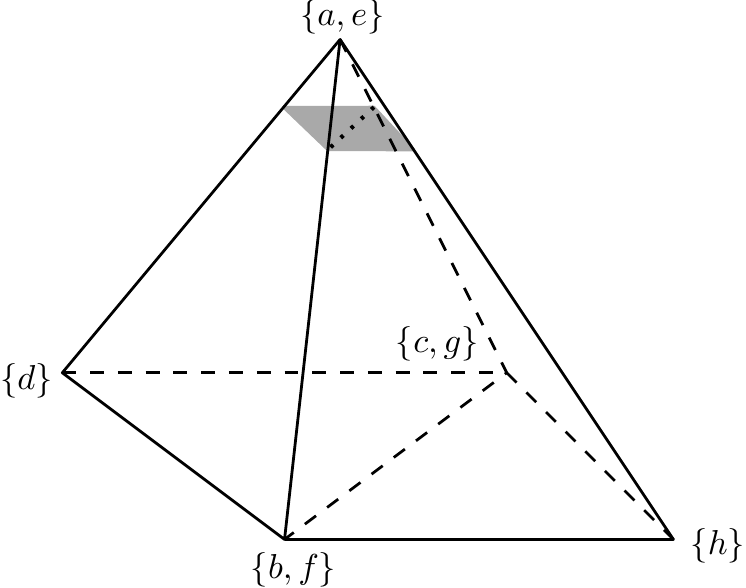}}
  \caption{A general triangulation with one face identification, shown as two
  distinct tetrahedra with an arrow indicating the identification (a); and the
combined triangulation (b). The grey rectangle in (b) indicates the link of the
vertex $\{a,e\}$.}
  \label{fig:gt}
\end{figure}
Consider two tetrahedra; one with
vertices labelled $a$, $b$, $c$, and $d$ and the second with vertices labelled
$e$, $f$, $g$ and $h$; and then apply the face identification $\face{abc} \ident
\face{efg}$.  The resulting triangulation is topologically a 3-ball.
Figure \ref{fig:gt_a} shows this triangulation, with the arrow indicating
two faces being identified. The actual identification involved is not displayed
in the diagram, however.
\end{example}

We now detail the properties a general triangulation must have to represent a 3-manifold. 

\begin{lemma}\label{lemma:3mfld_tri}
A general triangulation is a {\em 3-manifold triangulation}  if
the following additional conditions hold:
\begin{itemize}
  \item the triangulation is connected;
  \item the link of any vertex in the triangulation is homeomorphic to either a
    2-sphere or a disc;
  \item no edge in the triangulation is identified with itself in reverse.
\end{itemize}
\end{lemma}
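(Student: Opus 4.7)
The plan is to show that each point of the quotient space has a neighbourhood homeomorphic to $\R^3$ or to closed half-space $\R^3_{z\geq 0}$, by stratifying points according to which skeleton they lie in (open tetrahedron, open face, open edge, or vertex).

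First I would dispose of the two easy strata. A point in the interior of some $\Delta_i$ is already surrounded by an open ball inside a single tetrahedron, so it has an $\R^3$-neighbourhood. For a point in the interior of a face $F$: if $F$ appears in some $\pi_j$, then the two tetrahedra meeting along $F$ glue to give a neighbourhood homeomorphic to $\R^3$; if $F$ is unused by every $\pi_j$, the point lies on the topological frontier and only one tetrahedron contributes, giving a half-space neighbourhood. So far the three hypotheses have not been needed, beyond the implicit fact that ``each face is in at most one bijection,'' which is built into the definition of a general triangulation.

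Next I would handle the vertices, which is the stratum where the link hypothesis is used directly. A small regular neighbourhood $N(v)$ of a vertex $v$ deformation retracts to the cone on $\Link(v)$. If $\Link(v)\cong S^2$ then $N(v)$ is homeomorphic to the cone on $S^2$, namely the closed $3$-ball $B^3$, so $v$ has an $\R^3$-neighbourhood. If $\Link(v)\cong D^2$ then $N(v)$ is the cone on $D^2$, which is homeomorphic to a half-ball and therefore to a neighbourhood of a boundary point of $\R^3_{z\geq 0}$. Either way $v$ is a manifold point, provided we know the link really is what we claim. (A subtle point I would have to address is that $\Link(v)$ is defined only up to what a ``small regular neighbourhood'' means; I would justify this by pointing out that the link is naturally a simplicial $2$-complex built from the vertex-links in the individual $\Delta_i$ glued by the face identifications, and is well-defined up to PL-homeomorphism.)

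The main obstacle, and the step I expect to be most delicate, is the edge stratum. Pick a point $p$ in the interior of an edge $e$ of the triangulation, and consider the union of the small wedge-shaped neighbourhoods of $p$ coming from each of the $\deg(e)$ tetrahedral edges identified to form $e$. The link of $p$ inside each tetrahedron is a half-disc (a ``slice'' transverse to the edge); the face identifications glue these half-discs along their straight boundary segments. I would argue that the resulting $2$-complex is a $1$-manifold cross an interval only when the gluings proceed consistently around $e$. The obstruction to this consistency is precisely an identification that sends some tetrahedral edge to itself with reversed orientation: such an identification forces two successive half-discs to be joined by a Möbius-style flip, producing a neighbourhood of $e$ that is a twisted $I$-bundle rather than $D^2\times I$, hence not locally Euclidean. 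The third hypothesis rules this out, so each slice is a disc (in the interior case) or a half-disc (if some adjacent face is unglued), and $p$ acquires an $\R^3$- or half-space-neighbourhood. I would also use the vertex-link hypothesis at the two endpoints of $e$ to rule out any further pathology where the edge-link fails to close up into a circle or arc; concretely, if the edge-link around $e$ were not a $1$-manifold, the star of either endpoint in $\Link(v)$ would fail to be a disc, contradicting the hypothesis. Combining the four strata gives a Hausdorff, second-countable space every point of which is Euclidean or half-Euclidean, and connectedness (the first hypothesis) then upgrades ``manifold with boundary'' to ``connected $3$-manifold,'' completing the proof.
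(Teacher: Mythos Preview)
The paper does not actually prove this lemma: immediately after stating it, the authors write ``It is well known that these conditions are both necessary and sufficient for the underlying topological space to be a 3-manifold (possibly with boundary)'' and move on. So there is nothing in the paper to compare against; your stratification argument (interior of tetrahedra, then faces, then edges, then vertices) is the standard textbook route and is the right way to fill in what the authors omit.

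One point in your edge stratum deserves correction. When an edge is identified with itself in reverse, the neighbourhood of $e$ is \emph{not} a twisted $I$-bundle; a twisted $I$-bundle is still a manifold, so ``hence not locally Euclidean'' would not follow from that description. What actually happens is that the quotient edge folds onto a half-interval, and at the fold point (the midpoint of the tetrahedral edges) the link is $\mathbb{RP}^2$: the bigons coming from the surrounding tetrahedra glue up with a final flip that swaps the two poles, yielding a bigon with its two boundary arcs identified by a reflection, i.e.\ $\mathbb{RP}^2$. The cone on $\mathbb{RP}^2$ is not a manifold, and that is the genuine obstruction the third hypothesis removes. Your observation that a non-$1$-manifold transverse edge-link would already be detected by the vertex-link hypothesis is correct, but it is orthogonal to the reversal phenomenon; the transverse link can be a perfectly good circle while the longitudinal fold still produces the $\mathbb{RP}^2$ link at the midpoint. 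With that adjustment, your outline goes through.
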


It is well known that these conditions are both necessary and sufficient for the underlying topological space to be a 3-manifold (possibly with boundary).
However in this paper we only consider with 3-manifolds without boundary. That is, every face of a tetrahedron will be identified with some other face in a 3-manifold triangulation.
Example \ref{ex:3sphere} now gives an example 3-manifold triangulation of a 3-sphere.
\begin{example}\label{ex:3sphere}
\begin{figure}
  \centering
  \includegraphics{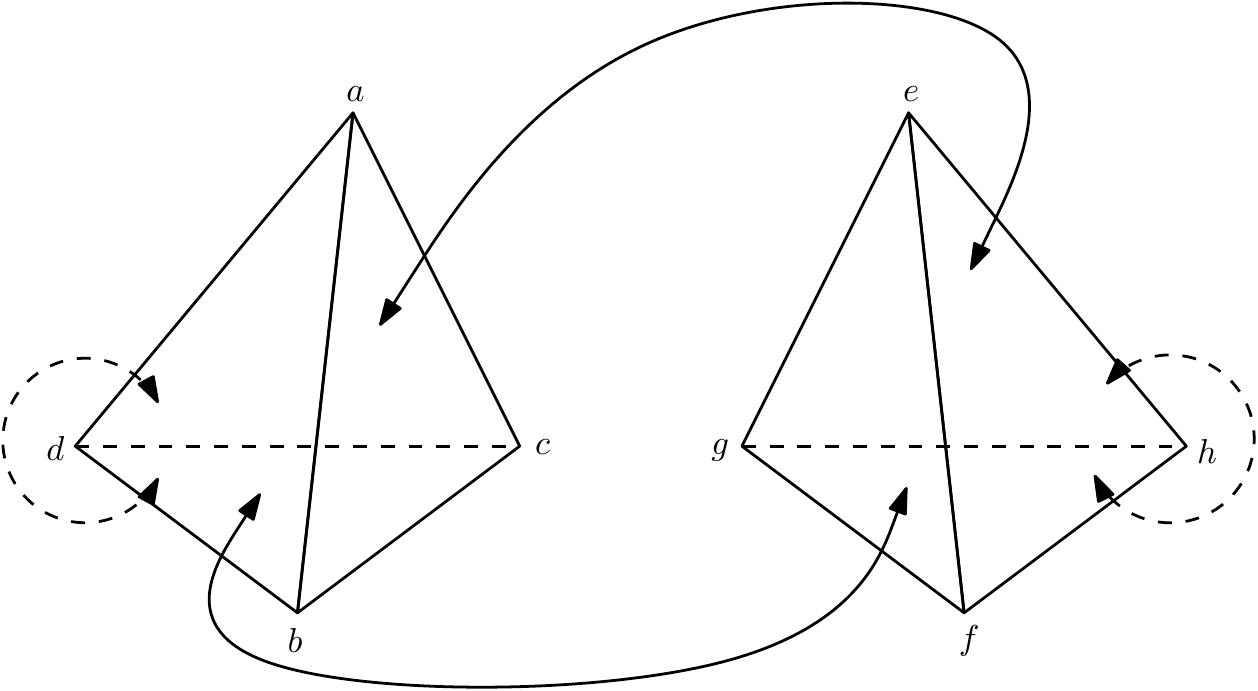}
  \caption{A visual representation of the  3-sphere triangulation described in
    Example \ref{ex:3sphere}. The arrows indicate which faces are identified
  together with dashed arrows referring to the ``back'' faces. Note that some
of the identifications involve rotations or flips which are not shown in the
figure.}
  \label{fig:3sphere}
\end{figure}
Figure \ref{fig:3sphere} shows how two tetrahedra may have faces identified
together to form a triangulation of a 3-sphere\footnote{A 3-sphere is a
  3-manifold that in $\mathbb{R}^4$ can be modelled as $x_1^2 + x_2^2 + x_3^2 +
x_4^2 = 1$. It can be thought of as the 3-dimensional surface of a 4-dimensional ball.}.
Each tetrahedron has two faces identified together, and
another two faces identified with the corresponding faces from the second
tetrahedra. The exact identifications are as follows.
$$
\begin{array}{cc}
  \face{abc} \ident \face{hfe} & \face{abd} \ident \face{gfe} \\
  \face{acd} \ident \face{bcd} & \face{egh} \ident \face{fgh}
\end{array}
$$
\end{example}

We now given some results on the links of vertices in various
triangulations and manifolds. These results are well known, and are given for
completeness. First, however, we need the following definition.

\begin{definition}
  The {\em Euler characteristic} of a triangulation is topological invariant, denoted as $\chi$. For triangulations it can be calculated as $\chi =
  V - E + F - T$ where $V$, $E$, $F$ and $T$ are the number of vertices, edges,
  faces and tetrahedra in the triangulation respectively. For 2-dimensional triangulations, $\chi = V - E + F$.
\end{definition}

For the following proofs, we also briefly need the Euler characteristic of a cell decomposition\footnote{A triangulation is a cell decomposition satisfying some extra properties.}.
We omit the technical details, but $\chi$ can be calculated as $\sum_i (-1)^i k_i$ where $k_i$ is the number of $i$-cells in the decomposition.

It is well known by the classification of 2-manifolds (\cite{Weeks1999ZIP}) that $\chi \leq 2$ for any surface and that $\chi = 2$ if and only if the the surface is a 2-sphere.
Additionally, a closed 3-manifold (that is, a compact 3-manifold with no boundary) has an Euler characteristic of zero (by Poincar\'e duality, see \cite{Hatcher2002Algebraic}).

The following lemmas will help determine the form of links in various
triangulations.

\begin{lemma}\label{lemma:inclusionexclusion}
  Take two triangulations $L$ and $K$ with combinatorially equivalent boundaries, and create a new triangulation $M$ by identifying $L$ and $K$ along their boundaries.
  Then $\chi(M) = \chi(L) + \chi(K) - \chi(\partial L)$.
\end{lemma}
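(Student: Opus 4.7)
The plan is to use the inclusion-exclusion formula for Euler characteristics via cell counts, exploiting the fact that the Euler characteristic of a cell complex is $\sum_i (-1)^i k_i$ where $k_i$ denotes the number of $i$-cells (as noted just before the lemma).

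First I would fix notation. Let $k_i(X)$ denote the number of $i$-cells in the triangulation/cell decomposition $X$. The key combinatorial observation is that when $M$ is formed by gluing $L$ and $K$ along $\partial L \cong \partial K$, every cell of $M$ comes from either an interior cell of $L$, an interior cell of $K$, or a cell on the common boundary (where cells of $\partial L$ and $\partial K$ get identified into a single cell of $M$). Hence, for each $i$,
\begin{equation*}
k_i(M) = \bigl(k_i(L) - k_i(\partial L)\bigr) + \bigl(k_i(K) - k_i(\partial K)\bigr) + k_i(\partial L),
\end{equation*}
and since $\partial L$ and $\partial K$ are combinatorially equivalent this simplifies to $k_i(M) = k_i(L) + k_i(K) - k_i(\partial L)$.

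Next I would multiply through by $(-1)^i$, sum over all dimensions $i$, and use linearity to get
\begin{equation*}
\chi(M) = \sum_i (-1)^i k_i(M) = \chi(L) + \chi(K) - \chi(\partial L),
\end{equation*}
which is exactly the claim.

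There is no serious obstacle here; the argument is essentially a book-keeping calculation. The one subtlety worth mentioning explicitly is that the statement only makes sense because the gluing is along \emph{combinatorially equivalent} boundaries, which guarantees that cells of $\partial L$ and $\partial K$ are identified in a dimension-preserving bijection, so that the simple cell count above is valid. If one were worried about the combinatorial structure of $M$ being a genuine cell decomposition (rather than just a topological identification), this would need a short justification, but it follows directly from the affine nature of the face identifications described in the preceding definitions.
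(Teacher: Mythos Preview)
Your argument is correct and is exactly the ``simple counting argument along the shared boundary'' that the paper invokes in lieu of a written-out proof. You have simply made explicit the dimension-by-dimension inclusion--exclusion on cell counts that the paper leaves to the reader.
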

The above follows from a simple counting argument along the shared boundary. We can now prove Lemma \ref{lemma:2sphere-links}.

\begin{lemma}\label{lemma:2sphere-links}
  Given any connected closed triangulation $T$ on $n$ tetrahedra with $k$ vertices where no edge is identified with itself in reverse, the triangulation has $n+k$ edges if and only if the link of each vertex in $T$ is homeomorphic to a 2-sphere.
\end{lemma}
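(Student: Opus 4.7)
My plan is to reduce the lemma to a global Euler-characteristic identity by summing $\chi(\Link(v))$ over all vertices $v$, and then to apply the classification of closed surfaces to each link individually. The first ingredient is a double-counting argument showing
\[
  \sum_{v} \chi(\Link(v)) \;=\; 2E - 2n,
\]
where $E$ is the number of edges of $T$. Here $\Link(v)$ is a 2-complex whose triangles, edges, and vertices correspond respectively to tetrahedron-corners at $v$, face-corners at $v$, and edge-endpoints at $v$; summing over all $v$ therefore totals $4n$ triangles, $6n$ edges (using that $T$ is closed, so it has $2n$ triangular faces), and $2E$ vertices.

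The second ingredient is that each $\Link(v)$ is a \emph{connected closed $2$-manifold}. Closedness as a 2-complex is immediate since every face of $T$ pairs two tetrahedron-face-corners, so every edge of $\Link(v)$ lies in exactly two of its triangles. At each vertex $w$ of $\Link(v)$, corresponding to an edge-endpoint of $T$ at $v$, the local link of $w$ inside $\Link(v)$ is the cyclic chain of tetrahedra around the corresponding edge of $T$; the hypothesis ``no edge is identified with itself in reverse'' is precisely what forces this chain to close up into a circle rather than an arc, giving the 2-manifold property. For connectedness I would argue that $v$ is by definition an equivalence class of tetrahedron-corners generated by face identifications, and each face identification used in a chain witnessing $\alpha \sim \beta$ glues a pair of edges of $\Link(v)$, so the triangles at $\alpha$ and $\beta$ lie in the same component.

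With these ingredients in hand, each $\Link(v)$ is a connected closed 2-manifold, so by the classification of surfaces cited earlier $\chi(\Link(v)) \leq 2$, with equality if and only if $\Link(v) \cong S^{2}$. Summing over the $k$ vertices gives $2E - 2n \leq 2k$. If every link is a 2-sphere then equality holds and $E = n+k$; conversely, if $E = n+k$ then $\sum_v \chi(\Link(v)) = 2k$ saturates the bound coming from $k$ summands each at most $2$, forcing each $\chi(\Link(v)) = 2$ and hence each link to be a 2-sphere. This yields both directions simultaneously.

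The main obstacle I anticipate is the second ingredient: verifying carefully that the ``no self-reverse'' hypothesis translates into circular edge-links (so that $\Link(v)$ is a genuine 2-manifold) and then writing out the connectedness argument cleanly in terms of the equivalence-class-of-corners description of a triangulation vertex. By comparison, the cell-counting in the first ingredient and the classification step in the third paragraph should be entirely routine.
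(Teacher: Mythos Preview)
Your proof is correct and reaches the same key identity as the paper, namely $\sum_v \chi(\Link(v)) = 2(E-n)$, after which both arguments finish identically via the surface classification bound $\chi \leq 2$. The route to that identity, however, is genuinely different. The paper truncates $T$ along all vertex links to obtain a bounded $3$-manifold $T'$, doubles $T'$ along its boundary, and invokes Poincar\'e duality ($\chi=0$ for closed $3$-manifolds) together with the inclusion--exclusion Lemma~\ref{lemma:inclusionexclusion} to extract the identity. Your argument bypasses all of this: you count the cells of $\bigsqcup_v \Link(v)$ directly ($4n$ triangles, $6n$ edges, $2E$ vertices) and read the identity off. This is strictly more elementary---no Poincar\'e duality, no auxiliary doubled manifold---and arguably cleaner. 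What the paper's route buys is that the manifold property of $T'$ (hence of its boundary $\bigsqcup_v \Link(v)$) is asserted in one line, whereas you must argue it by hand; but you have correctly identified this as the one place requiring care, and the ``no self-reverse'' hypothesis is indeed exactly what makes the link of each vertex of $\Link(v)$ a single circle.
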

\begin{proof}
Let the triangulation have $e$ edges. As
each face of $T$ is identified with exactly one other face,
and a tetrahedron has 4 faces, we know that $T$ must have $2n$
faces.  Then $\chi(T) = k - e + 2n - n = n+k-e$ which immediately gives one direction of the proof.

Label each vertex in $T$ by $\{v_1,\ldots,v_k\}$.
Now consider the collection of truncated tetrahedra (or 3-cells) $T'$ created from $T$ by truncating $T$ along the link of every vertex $v_i$.
The boundary of $T'$ is therefore the union of the links of the vertices of $T$.
When forming $T'$ from $T$, for each vertex $v_i$ we removed one 0-cell, and then added a cell for every vertex, face and edge of $\Link(v_i)$.
As a result, we get that \[\chi(T') = \chi(T) + \sum_{i=1}^k \left(\chi(Link(v_i)) - 1\right).\]
Furthermore, since $\chi(T) = n+k-e$ we get
\begin{align*}
  \chi(T') = n - e + \sum_{i=1}^k\chi(\Link(v_i)). \tag{$\star$}
\end{align*}

Note that $T'$ represents a compact 3-manifold with boundary as it contains no edge identified with itself in reverse and every vertex on $\partial T'$ has a link homeomorphic to a disc.

Now take a copy of $T'$, call the copy $T''$, and identify the boundary of $T'$ to the boundary of $T''$ to form the triangulation $T^\dagger$. 
Since $T'$ is a compact 3-manifold with boundary, $T^\dagger$ is a closed 3-manifold, giving $\chi(T^\dagger)=0$ (even if the vertex links of $T$ are not spheres).

If we set $L=T'$, $K=T''$ and $M = T^\dagger$ in Lemma
\ref{lemma:inclusionexclusion}, 
then we get the following result.
\[
  0 = \chi(T') + \chi(T') - \sum_{i=1}^k\chi(\Link(v_i))
\]
We then substitute in from $(\star)$ and rearrange to get
\[
  \sum_{i=1}^k \chi(\Link(v_i)) = 2(e-n).
\]
Since $\chi(\Link(v_i)) = 2$ if and only if the link of each vertex is
homeomorphic to a 2-sphere, and is less than $2$ otherwise, we get that
$2k = 2(e-n)$ if and only if the link of each vertex of $T$ is homeomorphic to a
3-sphere.
\end{proof}

We also need to define the {\em face pairing graph} of a triangulation.
The {\em face pairing graph} of a triangulation, also known as the dual
1-skeleton, is a graphical representation of the face identifications of the
triangulation. Each tetrahedron is associated with a node in the face pairing
graph, and one arc joins a pair of tetrahedra for each identification of faces
between the two tetrahedra.  Note that face pairing graph is not necessarily a
simple graph.  Indeed, it will often contain both loops (when there is an
identification of two distinct faces of the same tetrahedron) and parallel arcs (when
there are multiple face identifications between two tetrahedra).

Lastly, we need a few properties of manifolds and triangulations.

\begin{definition}
  A 3-manifold $\mathcal{M}$ is {\em irreducible} if every embedded 2-sphere in $\mathcal{M}$ bounds a 3-ball in $\mathcal{M}$.
\end{definition}

\begin{definition}
  A 3-manifold $\mathcal{M}$ is {\em prime} if it cannot be written as a connected sum of two manifolds where neither is a 3-sphere.
\end{definition}

\begin{definition}
  A 3-manifold is {\em $\mathbb{P}^2$-irreducible} if it is irreducible and also contains no embedded two-sided projective plane.
\end{definition}

Prime manifolds are the most fundamental manifolds to work with.
We note that prime 3-manifolds are either irreducible, or are one of the orientable direct product $S^2 \times S^1$ or the non-orientable twisted product $S^2 \simtimes S^1$.
As these are both well known and have triangulations on two tetrahedra, for any census of minimal triangulations on three or more tetrahedra
we can interchange the conditions ``prime'' and ``irreducible''.
Any non-prime manifold can be constructed from a connected sum of prime manifolds, so enumerating prime manifolds is sufficient for most purposes.
A similar (but more complicated) notion holds for $\mathbb{P}^2$-irreducible manifolds in the non-orientable setting.
As such, minimal prime $\mathbb{P}^2$-irreducible triangulations form the basic building blocks in combinatorial topology.

\begin{definition}
  A 3-manifold triangulation of a manifold $\mathcal{M}$ is {\em minimal} if $\mathcal{M}$ cannot be triangulated with fewer tetrahedra.
\end{definition}

Minimal triangulations are well studied, both for their relevance to computation and for their applications in zero-efficient triangulations \cite{Jaco2003ZeroEfficient}.
Martelli and Petronio \cite{martelli02-decomp} also showed that, with the exceptions $S^3$, $RP^3$ and $L_{3,1}$,
the minimal number of tetrahedra required to triangulate a closed, irreducible and $\mathbb{P}^2$-irreducible 3-manifold $\mathcal{M}$ is equal to the
\emph{Matveev complexity} \cite{Matveev2007AlgorithmicTopology} of $\mathcal{M}$.

\section{Manifold decompositions}\label{sec:decomp}

In this section we define a fattened face pairing graph, and show how we can represent any general triangulation as a specific decomposition of its fattened face pairing graph.
This allows us to enumerate general triangulations by enumerating graph decompositions.
We then demonstrate how to restrict this process to only enumerate 3-manifold triangulations.

A {\em fattened face pairing graph} is an extension of a face pairing graph
$F$ which we use in a dual representation of the corresponding
triangulation. Instead of one node for each tetrahedron, a fattened face
pairing graph contains one node for each face of each tetrahedron. Additionally,
a face identification in the triangulation is represented by \emph{three} arcs in the fattened face
pairing graph; these three arcs loosely correspond to the three pairs of edges which are identified as a consequence of the face identification.
\begin{definition}
Given a face pairing graph $F$, a fattened face pairing graph 
is constructed by first tripling each arc (i.e., for each arc $e$ in $F$, add two more arcs parallel to $e$), and then replacing each node
$\nu$ of $F$ with a copy of $K_4$ such that each node of the $K_4$ is incident with exactly one set of triple arcs that meet $\nu$.

\end{definition}

\begin{example}\label{ex:ffpg}
Figure \ref{fig:ffpg} shows a face pairing graph and the resulting fattened
face pairing graph. The arcs shown in green are what we call
{\em internal} arcs. Each original node has been replaced
with a copy of $K_4$ and in place of each original arc a set of three parallel arcs have
been added.
\begin{figure}[h]
  \centering
  \subfloat[]{\includegraphics[scale=0.7]{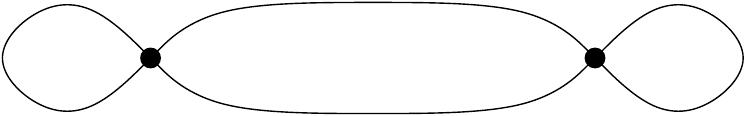}}
  \quad
  \subfloat[]{\includegraphics[scale=0.4]{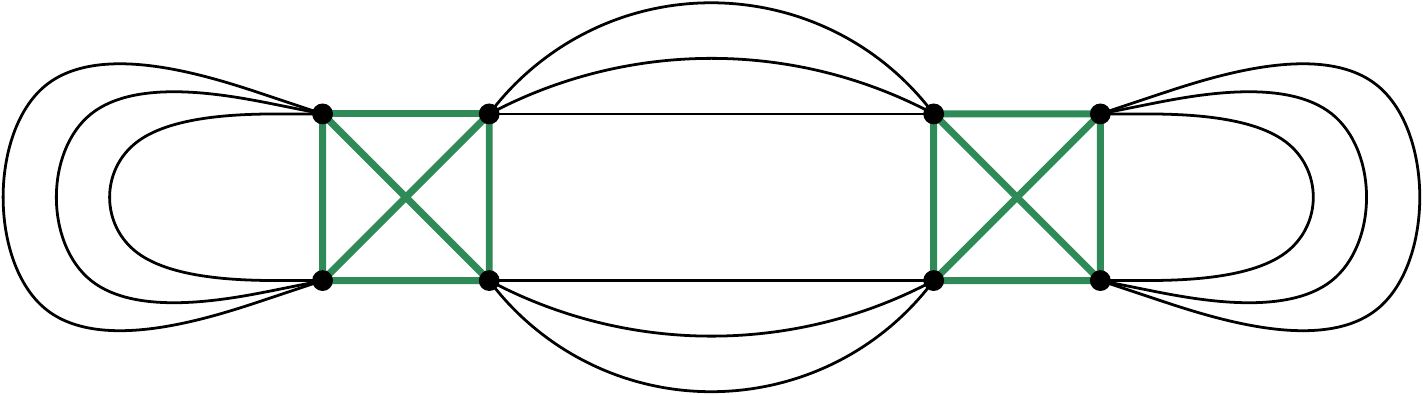}}
  \caption[Fattened face pairing graph]{The face pairing graph (a) and fattened face pairing graph (b) of a
  triangulation. Note that the green arcs are internal arcs, while the
  black arcs are external arcs.}
  \label{fig:ffpg}
\end{figure}
\end{example}

We will refer to the arcs of each $K_4$ as {\em internal arcs}, and the
remaining arcs (coming from the triple edges) as {\em external arcs}.
As a visual aid we will always draw internal arcs in green.
Each such $K_4$
represents a tetrahedron in the associated triangulation, and as such we
will say that a fattened face pairing graph has $n$ tetrahedra if it contains
$4n$ nodes.

Triangulations are often labelled or indexed in some manner.
Given any labelling of the tetrahedra and their vertices, we label the corresponding fattened face pairing graph as follows.
For each tetrahedron $i$ with faces $a$, $b$, $c$ and $d$, we label the nodes of the corresponding $K_4$ in the fattened face pairing graph
$v_{i,a}$, $v_{i,b}$, $v_{i,c}$ and $v_{i,d}$, such that if face $a$ of tetrahedron $i$ is identified with face $b$ of tetrahedron $j$ then there are three parallel external arcs between $v_{i,a}$ and $v_{j,b}$.

In such a labelling, the node $v_{i,a}$ represents face $a$ of tetrahedron $i$.
Each internal arc $\{v_{i,a},v_{i,b}\}$ represents the unique edge common to faces $a$ and $b$ of tetrahedron $i$.
Each external arc $\{v_{i,a},v_{j,b}\}$ represents one of the three pairs of edges of tetrahedra which become identified as a result of identifying face $a$ of tetrahedron $i$ with face $b$ of tetrahedron $j$.
Note that the arc only represents the pair of edges being identified, and does not indicate the orientation of said identification.

We now define {\em ordered decompositions} of fattened face pairing graphs.
Later we show that there
is a natural correspondence between such a decomposition and a general
triangulation,
and we show exactly how the 3-manifold constraints on general triangulations (see Lemma~\ref{lemma:3mfld_tri}) can be translated to constraints on these decompositions.
There is also a natural relationship between such decompositions and \emph{spines} of 3-manifolds, as used by Matveev and others
\cite{Matveev2007AlgorithmicTopology}; we touch on this relationship again later in this section.

\begin{definition}\label{definition:ordered-decomp}
An {\em ordered decomposition} of a fattened face pairing graph $F=(E,V)$ is a
set of closed walks $\{P_1,P_2,\ldots,P_n\}$
such that:
\begin{itemize}
  \item $\{P_1,P_2,\ldots,P_n\}$ partition the arc set $E$;
  \item $P_i$ is a closed walk of even length for each $i$; and
  \item if arc $e_{j+1}$ immediately follows arc $e_j$ in one of the walks then
  exactly one of $e_j$ or $e_{j+1}$ is an internal arc.
\end{itemize}
\end{definition}

An ordered decomposition of a fattened face pairing graph exactly describes a
general triangulation. We first outline this idea here by showing how three parallel external arcs can represent an identification of faces. 
Complete technical details follow later.

Since the ordered decomposition consists of closed walks of alternating internal and external arcs, the decomposition
pairs up the six arcs exiting each nodes so that
each external arc is paired with exactly one internal arc. To
help visualise this, we can draw such nodes
as larger ellipses, with three external arcs and three internal arcs entering the
ellipse, as in Figure \ref{fig:nodezoomed}. Each external arc meets exactly
one internal arc inside this ellipse. This only represents how such arcs are
paired up in a given decomposition---the node is still incident with all six
arcs. We also see in Figure \ref{fig:nodezoomed} that the fattened face
pairing graph can always be drawn such that any ``crossings'' of
arcs only occur between external arcs. Such crossings are simply artefacts of how the fattened face pairing graph is drawn in the plane, and in no way represent any sort of underlying topological twist.

\begin{figure}
  \centering
  \subfloat[]{\includegraphics[scale=0.5]{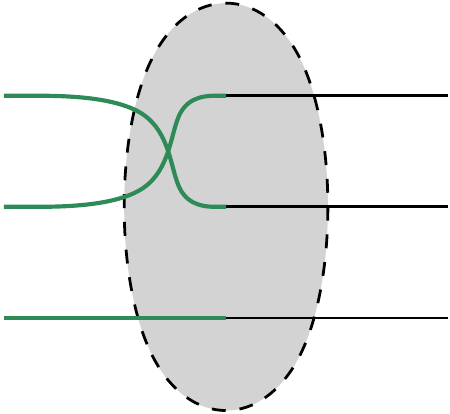}}
  \qquad
  \qquad
  \subfloat[]{\includegraphics[scale=0.5]{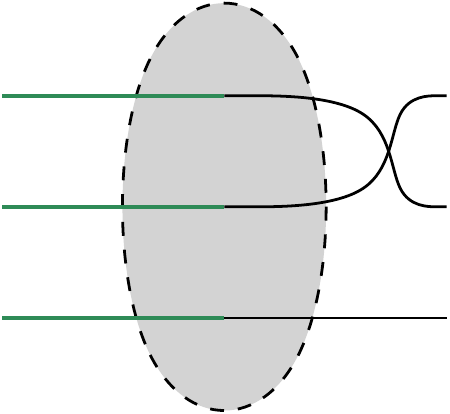}}
  \caption[Close up view of a node in a fattened face pairing graph]{Two close up views of a node of a fattened face pairing graph with
    the same pairing of arcs. The node itself is represented by the grey
    ellipse, and all six arcs are incident upon this node. Note how both figures
    show the same pairing of edges, the only difference is where the ``crossing''
  occurs.}
  \label{fig:nodezoomed}
\end{figure}

Figure \ref{fig:TwoNodes} shows a partial drawing of an ordered decomposition
of a fattened face pairing graph.
In this, we see a set of three parallel external arcs between nodes $v_{1,d}$
and $v_{2,h}$. This tells us that face $d$
of tetrahedron 1 is identified with face $h$ of
tetrahedron $2$. Additionally, we see that one of the external arcs connects
internal arc $\{v_{1,c},v_{1,d}\}$ with internal arc $\{v_{2,g},v_{2,h}\}$. This tells us that edge
$\face{ab}$ of tetrahedron $1$ (represented by $\{v_{1,c},v_{1,d}\}$) is identified
with
edge $\face{ef}$ of tetrahedron $2$ (represented by $\{v_{2,g},v_{2,h}\}$). Since we
know that face $\face{abc}$ is identified with face $\face{efg}$ modulo a possible reflection and/or rotation, this
tells us that
vertex $c$ is identified with vertex $g$ in this face identification. We can
repeat this process for the other paired arcs to see that vertex $a$ is
identified with vertex $e$ and vertex $b$ is identified with vertex $f$. The
resulting identification is therefore $\face{abc} \ident \face{efg}$.

\begin{figure}
  \centering
  \includegraphics[scale=0.6]{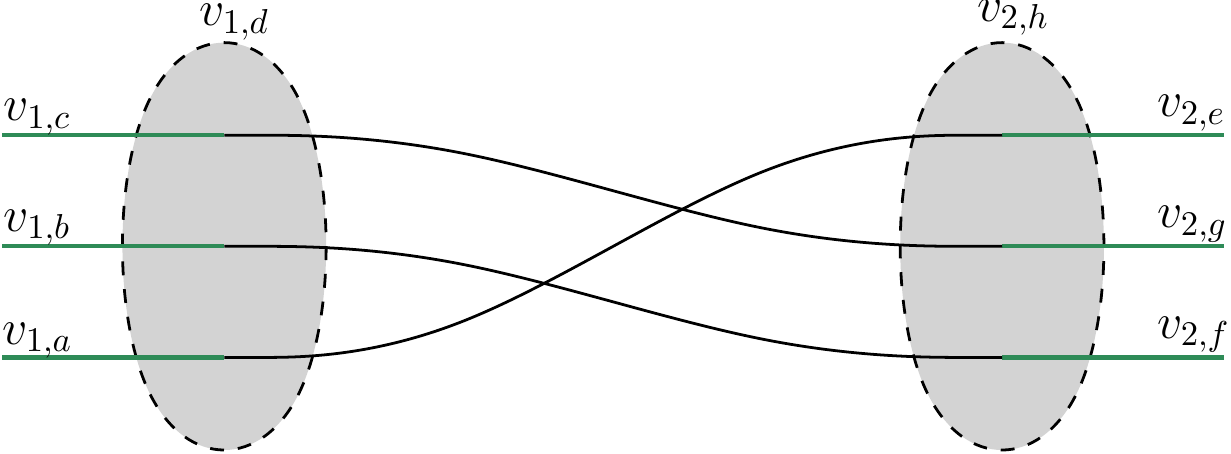}
  \caption{A partial drawing of a fattened face pairing graph.}
  \label{fig:TwoNodes}
\end{figure}

Repeating this for each set of three parallel external arcs gives the required triangulation.
The process is easily reversed to obtain an ordered decomposition from a general triangulation.

We now give the technical details for the construction of an ordered decompositions from a general triangulations, and vice-versa.

\begin{con}[Constructing a general triangulation from an ordered decomposition of a fattened face pairing graph.]\label{lemma:ffpg_to_gtri}

It is straight forward to see that we can simplify a ordered decomposition of
a fattened face pairing graph into a regular face pairing graph, and this gives
a collection of tetrahedra and shows which faces are identified. What remains
is to determine the exact identification between each pair of faces.

First we label the nodes of the fattened face pairing graph such that each $K_4$ in the fattened face pairing graph has nodes labelled $v_{i,a}, v_{i,b}, v_{i,c}, v_{i,d}$.
The choice of $i$ here assigns the label $i$ to the corresponding tetrahedron in the triangulation.
Similarly, the assignment of $v_{i,a}$ to a node labels a face of the corresponding tetrahedron.
Different labellings of nodes will therefore result in a triangulation with different labels on tetrahedra and vertices.
However, up to isomorphism the actual triangulation is not changed.

For each identification of two tetrahedron faces, we have three corresponding external arcs in
the fattened face pairing graph. Each arc $e$ out of these three belongs to one walk in the ordered decomposition, and in said walk $e$ has 
exactly one arc $e_1$ preceding it and one
arc $e_2$ succeeding it such that the sequence of arcs $(e_1,e,e_2)$ occurs the walk.

Since $e$ is an external arc, $e_1$ and $e_2$ must
be internal arcs and therefore of the form $\{v_{i,a},v_{i,b}\}$ where $a\neq b$.
Let $e=\{v_{i,b},v_{j,c}\}$, $e_1 = \{v_{i,a},v_{i,b}\}$ and $e_2
=\{v_{j,c},v_{j,d}\}$.  This tells us that this identification is between face
$a$ of tetrahedron $i$ and face $d$ of tetrahedron $j$, and in
this identification the edge
common to faces $a$ and $b$ on tetrahedra $t_i$ is identified with the edge
common to faces $c$ and $d$ on tetrahedra $t_j$.
The orientation of this edge identification is not given, however it is not needed..
Each of faces $a$ and $d$ have three vertices, and this identification of edges also identifies two vertices from face $a$ with two vertices from face $d$.
This leaves one vertex from each face, which must be identified together.
By repeating this process for the two external arcs parallel to $e$ we can therefore determine the actual face identification between face $a$ and face $d$.

\end{con}

\begin{example}
\begin{figure}
  \centering
  \includegraphics[scale=0.8]{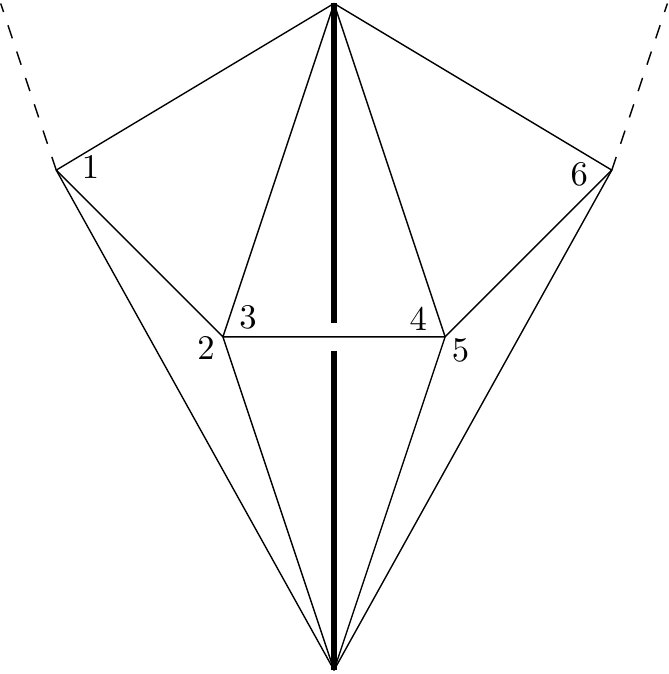}
  \caption{Three tetrahedra about a central edge. Note that only vertices of tetrahedra are labelled in this diagram (i.e.,~vertices 2 and 3 are both of tetrahedra, but in the triangulation they are identified together), and recall that vertex $1$ is opposite face $1$.}
  \label{fig:FaceIdentToDecompExample}
\end{figure}
  First we give an example of how to partially build an ordered decomposition.
  For this example we have a triangulation edge of degree $\geq 3$, depicted in Figure \ref{fig:FaceIdentToDecompExample} as the thicker central edge.
  Recall that face $x$ of a tetrahedron is the face opposite vertex $x$.
  We see that in the leftmost tetrahedron, the thickened edge is opposite vertices $1$ and $2$, and that
  face $1$ is identified with face $4$.
  We therefore have the sequence $(\{v_{1,1},v_{1,2}\},\{v_{1,1},v_{2,4}\},\ldots)$ occurring in one of the walks of the ordered decomposition.
  
  Continuing this process shows that the sequence 
  \[(\{v_{1,1},v_{1,2}\},\{v_{1,1},v_{2,4}\},\{v_{2,4},v_{2,3}\},\{v_{2,3},v_{3,6}\},\{v_{3,6},v_{3,5}\},\ldots)\]
  occurs in one of the walks of the ordered decomposition.
\end{example}

\begin{con}[Constructing an ordered decomposition from a general triangulation.]\label{lemma:gtri_to_ffpg}

First construct the fattened face pairing graph from the face pairing graph of the triangulation.
We now label the fattened face pairing graph. Begin by labelling the tetrahedra in the triangulation, and their vertices.
Label the individual nodes of the fattened face pairing graph such that if face $a$ of tetrahedron $i$ is identified with face $b$ of tetrahedron $j$ then the corresponding three parallel arcs are between node $v_{i,a}$ and node $v_{j,b}$ in the fattened face pairing graph.

Recall that an edge $ab$ is the edge between vertices $a$ and $b$. Given a tetrahedron with vertices labelled $a$, $b$, $c$ and $d$, the edge $ab$ has as endpoints the two vertices $a$ and $b$ and thus is the intersection of face $c$ and face $d$, so the edge $ab$ in the triangulation is represented by the arc $\{v_{i,c},v_{i,d}\}$ in the fattened face pairing graph.

Start with an edge $ab$ on tetrahedron $i$ in the triangulation, and
add $\{v_{i,c},v_{i,d}\}$ to the start of what will become a walk in the ordered
decomposition.

\begin{figure}
  \centering
  \includegraphics[scale=0.8]{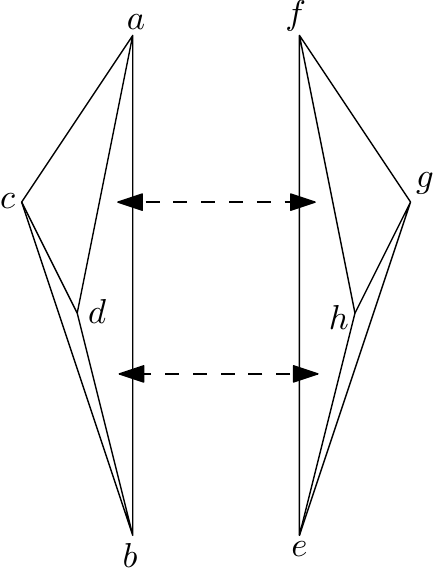}
  \caption{Face $c$ of tetrahedron $i$ is identified with face $g$ of tetrahedron $j$. As a result, one of the walks of the ordered decomposition contains the three arcs $\{\{v_{i,d},v_{i,c}\},\{v_{i,c},v_{j,g}\},\{v_{j,g},v_{j,h}\}$ in order.}
  \label{fig:FaceIdentToDecomp}
\end{figure}

Face $c$ on this tetrahedron must be identified with some face $g$ on tetrahedron
$j$. For a diagram, see Figure \ref{fig:FaceIdentToDecomp}. Through this
identification, the edge $ab$ must be identified with some edge on face
$g$.  Call this edge $ef$. Add one of the three arcs $\{v_{i,c},v_{j,g}\}$ to
the current walk. Since a face contains three edges, by construction
we can always find such an arc which is not already in one of the walks of the ordered decomposition.
If $\{v_{j,g},v_{j,h}\}$ is already in this walk then we are finished with the walk.
Otherwise, add the arc $\{v_{j,g},v_{j,h}\}$ into the walk.
The process then continues with the edge $ef$.  Since each tetrahedron edge is the
intersection of two faces of a tetrahedron, it is clear that this process will
continue until the initial edge $ab$ is
reached and the current walk is complete.

The above procedure is then repeated until all arcs have been added to a
walk. By construction, we have created an ordered decomposition with
the required properties.
\end{con}

  Recall that $\deg (e)$ is the number of edges of tetrahedra identified
  together to form edge $e$ in the triangulation.  The following corollary follows immediately from the constructions.

\begin{corollary}\label{cor:edge_link}
  Given an ordered decomposition $\{P_1,\ldots,P_t\}$, each
  walk $P_i$ corresponds to exactly one edge $e$ in the
  corresponding general triangulation. In addition, $|P_i| = 2 \deg(e)$.
\end{corollary}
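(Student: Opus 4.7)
The plan is to deduce the corollary directly from the two constructions that precede it, which together set up an explicit bijection between walks in an ordered decomposition and edges in the triangulation. Since both constructions are essentially algorithmic, the work is mostly bookkeeping: tracking how internal and external arcs represent tetrahedron edges and face identifications, and then counting.

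First I would argue that every walk $P_i$ determines a single triangulation edge. Take $P_i$ and pick any internal arc $\{v_{i,a},v_{i,b}\}$ appearing in it. By Construction~\ref{lemma:ffpg_to_gtri}, this internal arc represents the tetrahedron edge opposite to the pair of faces $a,b$ on tetrahedron $i$. The external arcs immediately before and after it represent face identifications that identify this tetrahedron edge with the two tetrahedron edges represented by the adjacent internal arcs in $P_i$. Since the walk alternates external and internal arcs, every internal arc in $P_i$ represents a tetrahedron edge that gets glued to the next one along the face identification recorded by the intervening external arc. Hence all internal arcs of $P_i$ represent tetrahedron edges that are identified together in the quotient, yielding a single edge $e$ in the triangulation.

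Next I would argue the converse direction, that every triangulation edge $e$ arises from exactly one walk. This is precisely the content of Construction~\ref{lemma:gtri_to_ffpg}: starting from a tetrahedron edge representing $e$, the procedure deterministically builds a walk by following the cyclic list of tetrahedron edges identified to form $e$, terminating exactly when it returns to the starting internal arc. Because the walks of an ordered decomposition partition the arc set, and because distinct triangulation edges consist of disjoint sets of tetrahedron edges and disjoint sets of face-identification-induced edge pairings, the walks produced for different triangulation edges are disjoint, and together they must use every arc. So the correspondence $P_i \leftrightarrow e$ is a bijection.

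Finally, for the length: the walk associated to $e$ contains exactly one internal arc for each tetrahedron edge identified to form $e$, so it contains $\deg(e)$ internal arcs. Since the walk alternates between internal and external arcs and is closed, the number of external arcs equals the number of internal arcs, so $|P_i| = 2\deg(e)$. The only point that needs care is verifying that the walk closes after exactly $\deg(e)$ internal arcs rather than revisiting an earlier internal arc prematurely; this follows because each tetrahedron edge incident to $e$ is flanked by exactly two of the face identifications around $e$, so the alternating walk proceeds uniquely through all of them before returning. This is the main (and only) subtle point, but it follows from the deterministic nature of Construction~\ref{lemma:gtri_to_ffpg} together with the partition property of ordered decompositions.
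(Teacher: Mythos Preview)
Your proposal is correct and follows the same approach as the paper: the paper simply asserts that the corollary ``follows immediately from the constructions'' (Constructions~\ref{lemma:ffpg_to_gtri} and~\ref{lemma:gtri_to_ffpg}), and your argument is a careful unpacking of precisely that immediacy, together with the alternating-walk count for $|P_i|=2\deg(e)$.
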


Recall that in a 3-manifold triangulation, no edge may be identified with itself in reverse.
In the
triangulation one may consider the ring of tetrahedra
$\Delta_1,\ldots,\Delta_k$ (which need not be distinct) around an edge
$e=ab$, as in Figure \ref{fig:edge_marking_tetrahedra}. Start on $\Delta_1$,
and mark one edge incident to $e$ (say
$bc$) as being ``above'' $e$. Since $bc$ is ``above'' $e$, the face
$bcd$ must be the ``top'' face of $\Delta_1$, and thus the edge $bd$
must also be ``above'' $e$ and is marked. We can then track the edge $bd$
through a face identification, and across the top of the next tetrahedron. At
some point, we must reach $\Delta_1$ again. If $\Delta_1$ is reached via one of
the edges $ac$ or $ad$, then $e$ is identified with itself in
reverse. However, if $\Delta_1$ is reached via the edge $bc$ again, then
we know that the edge $ab$ is not been identified with itself in reverse.

\begin{figure}
  \centering
  \subfloat[Marking process]{\includegraphics[scale=0.8]{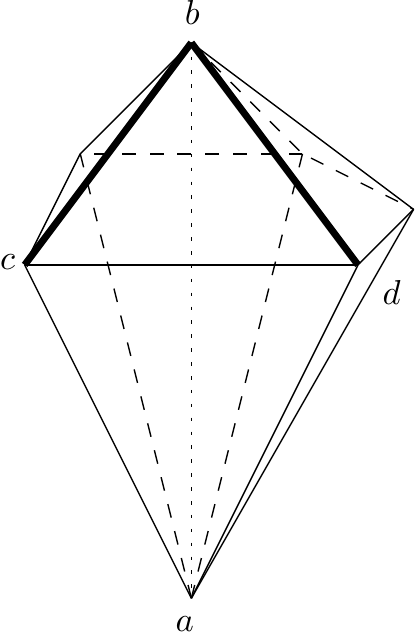}}
  \qquad
  \subfloat[A ``good'' marking]{\includegraphics[scale=0.8]{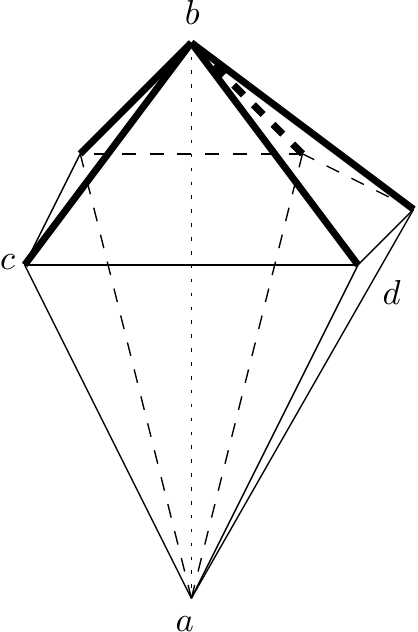}}
  \qquad
  \subfloat[A ``bad'' marking]{\includegraphics[scale=0.8]{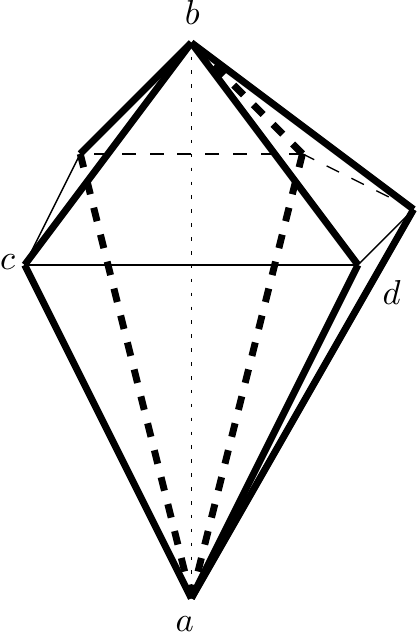}}
  \caption{In these figures, the thicker edges are marked. In (a), edge $bc$
    was arbitrarily marked as being above edge $ab$. Edge $bd$ is marked
    because $bc$ and $bd$ share a common face which does not contain $ab$. If
    the marking in (b) is reached, then the edge $ab$ is not identified with
    itself in reverse. If, however, (c) is reached, then $ab$ is identified with
    itself in reverse.}
  \label{fig:edge_marking_tetrahedra}
\end{figure}

Loosely speaking, in the decomposition setting, we look at one walk $P_x$
of our ordered decomposition and mark arcs in the decomposition as being
``above'' arcs in the walk $P_x$. If we again consider the edge $bc$ as
``above'' $ab$, we mark the arc\footnote{Recall that
  the arc $\{v_{i,a},v_{i,d}\}$ denotes the edge common to face $a$ and face
  $d$. Since face $a$ contains vertices $b$,$c$ and $d$ and face $d$ contains
vertices $a$, $b$ and $c$ this edge must be $bc$.} $\{v_{i,a},v_{i,d}\}$. Since
the ordered decomposition corresponds to exactly one triangulation, we can use
the ordered decomposition to determine which edge is identified with
$\{v_{i,a},v_{i,d}\}$.  We then mark the next edge, and proceed as in the
previous paragraph.
The following definition combined with Lemma \ref{lemma:no_single_edge_reverse} achieves the same result in our new framework.

\begin{definition}\label{definition:marking}
Given an ordered decomposition $\mathcal{P} = \{P_1,P_2,\ldots,P_t\}$, we can
\emph{mark} a walk $P_x$ as follows.
\begin{figure}
  \centering
  \includegraphics[scale=1.0]{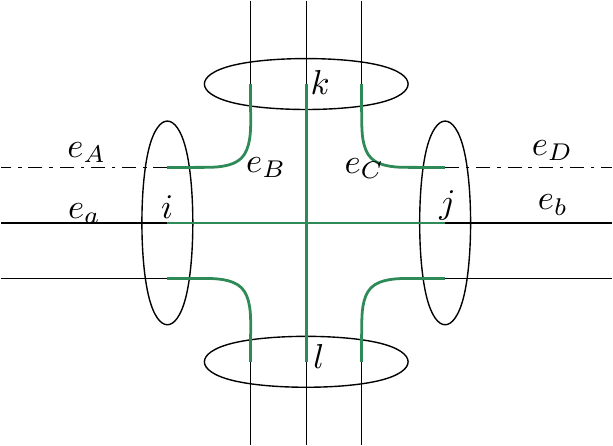}
  \caption{The process used to mark edges as per Definition
    \ref{definition:marking}. The dot-dashed arcs are the ones marked as ``above''. Recall that
  the ellipses are whole nodes, the insides of which denote how internal and
external arcs are paired up in the decomposition.}
  \label{fig:markingedges}
\end{figure}

Pick an external arc $e_s$ from $P_x$. Arbitrarily pick an external arc $e_S$
parallel to $e_s$, and mark $e_S$ as being ``above'' $e_s$.
Then let $e_a = e_s$ and $e_A = e_S$ and continue as follows
(see Figure \ref{fig:markingedges} for a diagram of the construction):

\begin{itemize}
  \item Let $e_b$ be the next external arc in $P_x$ after $e_a$. 
  \item The internal arc preceding $e_b$ joins two nodes. Call these nodes $i$
    and $j$, such that $e_b$ is incident on $j$.
  \item Some external arc $e_A$ incident on $i$ must be marked as ``above'' $e_a$. Find the
    closed walk which $e_A$ belongs to. In this closed walk there must exist
    some internal arc which either
    immediately precedes or follows $e_A$ through node $i$. Call this internal
    arc $e_B$.
    Note that the walk
    containing these two arcs need not be, and often is not, $P_x$. Arc $e_B$ must be incident to
    $i$, and some other node which we shall call $k$.
  \item Find the internal arc $e_C$ between nodes $k$ and $j$, and find the walk
    $P_y$ that it belongs to. In this walk, one of the arcs parallel to $e_b$ must
    either immediately precede or follow $e_C$ and be incident upon node $j$. Call this arc
    $e_D$.
  \item If $e_b = e_s$, and $e_D$ is already marked as being above $e_b$, we
  terminate the marking process.
  \item Otherwise, mark the arc $e_D$ as being above $e_b$ and repeat the above steps, now using $e_b$ in place of $e_a$,
    and using $e_D$ in place of $e_A$.
\end{itemize}
\end{definition}

Note that this process of marking specifically marks one arc as being ``above'' another.
It does not mark arcs as being ``above'' in general.

To visualise this definition in terms of the decomposition, see Figure
\ref{fig:markingedges}. The arcs $e_a$ and $e_b$ are part of a closed walk, and
we are marking the edges ``above'' this walk. Arc $e_A$ was arbitrarily
chosen. Arc $e_B$ follows $e_A$, and then we find $e_C$ as the arc sharing one
node with $e_B$ and one with $e_b$. From $e_C$ we can find and mark $e_D$.

We will show how to achieve a similar result in our new framework.
First we give a overview of the idea, with technical details to follow.
In brief, the walks containing $e_A$ and $e_D$ represent edges of tetrahedra in the triangulation
that share triangles with the common edge represented by $P_x$,
and which both sit ``above'' this common edge (assuming some up/down orientation).
Both $e_B$ and
$e_C$ are internal arcs of the same tetrahedron and share a common node $k$,
so we know that both these internal arcs represent edges of the same tetrahedron
which share a common face $k$.
The external arcs $e_A$ and $e_D$
represents identifications of $e_B$ and $e_C$ respectively
with edges of (typically different) adjacent tetrahedra.

\begin{lemma}\label{lemma:no_single_edge_reverse}
Take an ordered decomposition containing a walk $P_x$ with arcs marked
according to Definition \ref{definition:marking}, and consider the corresponding triangulation.
Then the edge of the triangulation represented by $P_x$ is identified to itself in reverse
if and only if there exists some external arc $e$ in $P_x$ that has two distinct external arcs both marked as ``above'' $e$.
\end{lemma}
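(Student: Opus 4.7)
The plan is to establish a step-by-step correspondence between the marking process of Definition \ref{definition:marking} applied to $P_x$ and the geometric tracking of ``above'' edges around the ring of tetrahedra incident to the triangulation edge $e$ represented by $P_x$, as sketched in Figure \ref{fig:edge_marking_tetrahedra}. Once this correspondence is verified, the lemma will follow immediately: a second distinct arc marked above some $e$ in $P_x$ is the decomposition-level witness of returning to the starting tetrahedron via a ``flipped'' edge.

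First I would translate the objects. By Corollary \ref{cor:edge_link}, $P_x$ corresponds to the edge $e$ of the triangulation; each external arc $e_a \in P_x$ corresponds to one identification of two tetrahedron edges meeting at $e$, namely a specific face identification between adjacent tetrahedra in the ring around $e$. The two external arcs parallel to $e_a$ represent the two remaining pairs of identified edges from that face identification, which are precisely the two candidate ``above'' edges over $e_a$ in Figure \ref{fig:edge_marking_tetrahedra}. Marking $e_S$ above $e_s$ therefore corresponds exactly to the arbitrary initial choice of ``above'' edge in the geometric process.

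Next I would interpret one step of the iteration. Given $e_a$ with $e_A$ marked above it, the next external arc $e_b$ in $P_x$ identifies the next face-pairing around $e$, and the internal arc preceding $e_b$ represents the edge of the next tetrahedron that meets $e$ and also lies on the face just glued. In the graph, node $i$ is the face of the previous tetrahedron that has just been glued and node $j$ is the corresponding face of the next tetrahedron. The arc $e_B$, being the internal arc adjacent to $e_A$ at node $i$ in its own walk, represents the edge of the previous tetrahedron sharing the ``top'' face with the edge represented by $e_A$; this is the edge $bd$ obtained from $bc$ in Figure \ref{fig:edge_marking_tetrahedra}. The internal arc $e_C$ between $k$ and $j$ is then the edge of the next tetrahedron along that top face, and $e_D$ is the external arc identifying that edge with the corresponding edge of its neighbour across the face $e_b$. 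Thus $e_D$ is exactly the ``above'' candidate at $e_b$ that results from pushing $e_A$ through the face identification given by $e_b$, which reproduces the geometric ``mark the next edge'' step.

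Finally I would conclude. Since each $e_b$ has exactly two parallel external-arc candidates (the third arc is $e_b$ itself, lying in $P_x$), the process deterministically walks once around the ring of tetrahedra. If it returns to $e_s$ with $e_D = e_S$, this is the good marking of Figure \ref{fig:edge_marking_tetrahedra}(b) and no edge is identified with itself in reverse; then every external arc of $P_x$ has exactly one arc marked above it. Otherwise $e_D$ is the unique external arc parallel to $e_s$ that is distinct from $e_S$, so $e_s$ acquires a second ``above'' mark, matching the bad marking of Figure \ref{fig:edge_marking_tetrahedra}(c) in which $e$ is identified with itself in reverse. The main obstacle will be this bookkeeping in the middle step: verifying in full generality (including the cases where the same tetrahedron occurs twice in the ring around $e$, so nodes $i$, $j$, $k$ need not be distinct) that the graph-theoretic rule for producing $e_D$ from $e_A$ really does coincide with tracking the edge across the next face. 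I would handle this by writing everything in terms of the local vertex labels $(a,b,c,d)$ of each tetrahedron and repeatedly applying the fact that an internal arc $\{v_{i,x},v_{i,y}\}$ represents the edge of tetrahedron $i$ common to faces $x$ and $y$.
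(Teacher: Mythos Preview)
Your proposal is correct and follows essentially the same approach as the paper: both arguments verify that one step of the marking rule in Definition~\ref{definition:marking} coincides with propagating the ``above'' (equivalently, an orientation) across one face identification in the ring around the edge, and then read off the dichotomy between a single consistent mark everywhere and a doubled mark. The only cosmetic difference is that the paper phrases the bookkeeping in terms of orientations on the tetrahedron edges and the common vertex $v$ shared by $e_B$, $e_C$ and $e_b$, whereas you phrase it by direct appeal to the geometric process of Figure~\ref{fig:edge_marking_tetrahedra}; the local label-chasing you outline for the ``middle step'' is exactly what the paper carries out with its vertex $v$.
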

\begin{proof}
Part of an ordered decomposition is shown in Figure \ref{fig:BigDecomp}, and we use the notation as shown there.
The part shown represents a single face identification between two (not necessarily distinct) tetrahedra.
The markings on the tetrahedra denote exactly what each labelled arc in the fattened face pairing graph represents.
As such, we say that an internal arc of the ordered decomposition ``is'' also an edge of a tetrahedron.
For example, $e_C$ is an internal arc, so it represents the edge of the tetrahedron yet we say that $e_C$ is the edge on the tetrahedron.
The external arcs all represent edges in face identifications, and are drawn with dashed lines.

\begin{figure}
  \centering
  \includegraphics[scale=0.8]{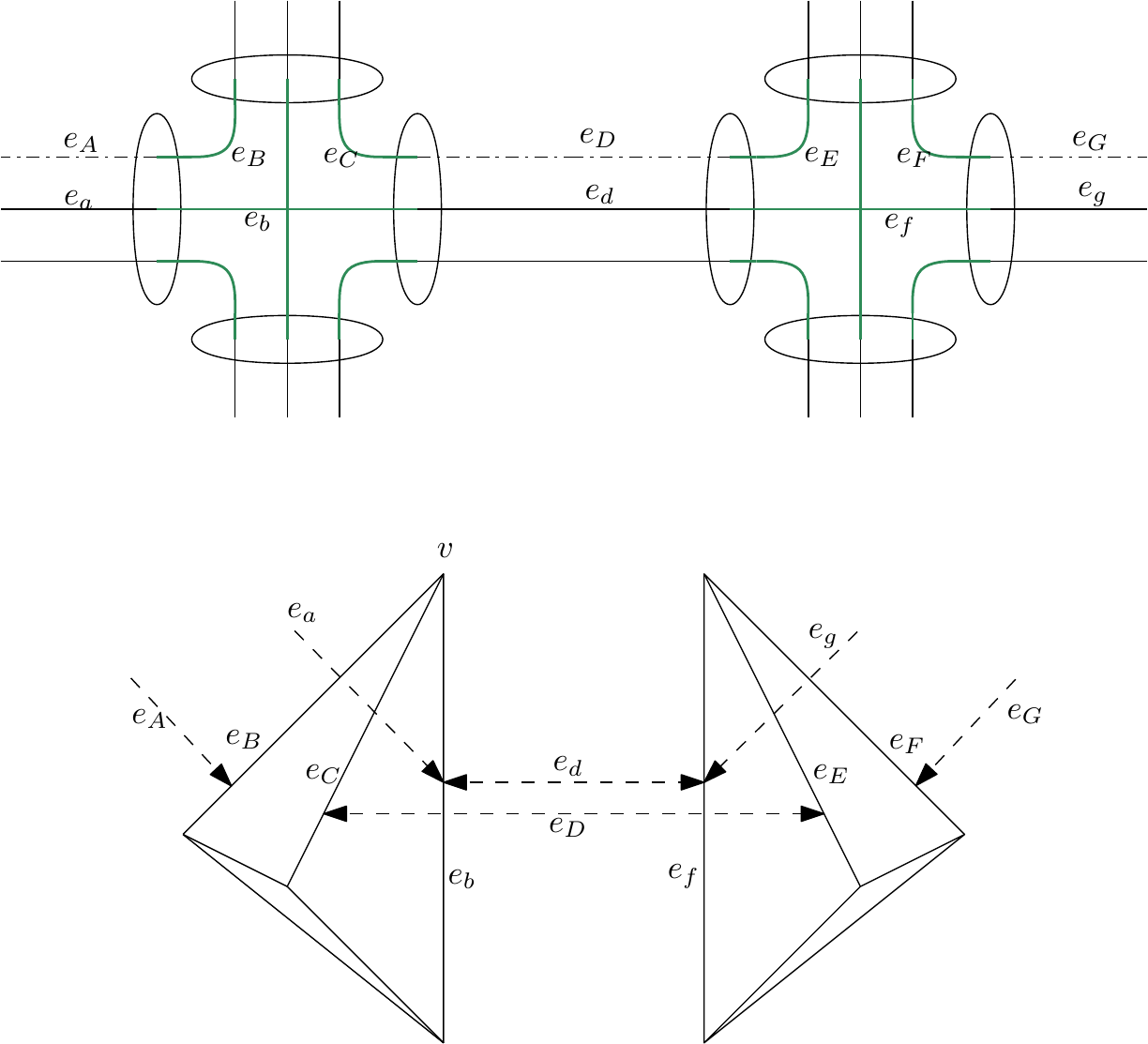}
  \caption{Part of an ordered decomposition, and associated tetrahedra. Identifications of edges are shown with dashed arrows.}
  \label{fig:BigDecomp}
\end{figure}

We prove the result by applying an orientation onto each of the edges of tetrahedra contained in the edge of the triangulation represented by $P_x$.
Consider first the arc $e_a$, which represents one edge identification in some face identification.
The arc $e_A$ (one of the two arcs parallel to $e_a$) is marked as being ``above'' $e_a$.
This is equivalent to assigning an orientation onto each of the pair of edges represented by $e_a$.
Since $e_b$ is one of these, we now have an orientation on the edge $e_b$.
We want to fix an orientation onto the edge $e_f$ such that the orientations of $e_b$ and $e_f$ agree after the identification of faces.
Since $e_B$ immediately follows $e_A$ (or vice-versa) and $e_C$ immediately follows $e_D$ (or vice-versa, again) in the ordered decomposition, edges $e_B$ and $e_C$ meet in a common tetrahedron vertex, call this vertex $v$.
We also see that the edge $e_b$ meets $v$. 
Since the edge $e_b$ is identified with the edge $e_f$ (via the edge identification represented by $e_d$), and the edge $e_C$ is identified with the edge $e_E$ (via the edge identification represented by $e_D$), $v$ must be identified to the vertex common to edges $e_f$ and $e_E$.

The orientation of the edge represented by $e_b$ has been used to orient the edge represented by $e_f$ such that the two orientations agree after the face identification.
Repeating this process for all arcs in $P_x$ in turn orients all the edges of tetrahedra that are contained in the edge of the triangulation.

If every external arc $e$ in $P_x$ has exactly one external arc marked as ``above'' $e$, then we have exactly one orientation for each edge of a tetrahedron.
That is, the edge of the triangulation corresponding to $P_x$ cannot be identified with itself in reverse.

If some external arc $e$ in $P_x$ has two distinct external arcs marked as ``above'' $e$, then every external arc must have two such other arcs marked (as the marking process can only terminate when it reaches $e_s$ in Definition \ref{definition:marking}).
This must mean that we have assigned two distinct orientations to each tetrahedron edge in the triangulation edge corresponding to $P_x$ and therefore this triangulation edge is identified with itself in reverse.
\end{proof}

If a walk $P_x$ in an ordered decomposition satisfies Lemma
\ref{lemma:no_single_edge_reverse}, we say that this walk is 
{\em non-reversing}.

\begin{definition}\label{definition:manifold_decomp}
A {\em manifold decomposition} is an ordered decomposition of a fattened face
pairing graph satisfying all of the following conditions.
\begin{itemize}
  \item The ordered decomposition contains $n+1$ closed walks.
  \item The fattened face pairing graph contains $4n$ nodes.
  \item Each walk is non-reversing.
  \item The associated manifold triangulation contains exactly $1$ vertex.
\end{itemize}
\end{definition}

\begin{theorem}\label{thm:md_equiv_3tri}
  Up to relabelling,
  there is a one-to-one correspondence between manifold decompositions of
  connected fattened face pairing graphs and 1-vertex 3-manifold triangulations.
\end{theorem}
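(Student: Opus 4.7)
My plan is to reduce the theorem directly to the machinery already built in Section \ref{sec:decomp}, treating the two directions of the correspondence as applications of the two constructions (Construction \ref{lemma:ffpg_to_gtri} and Construction \ref{lemma:gtri_to_ffpg}) combined with the characterisations in Lemmas \ref{lemma:2sphere-links} and \ref{lemma:no_single_edge_reverse}. The overall strategy is: the constructions already establish a bijection between ordered decompositions (of connected fattened face pairing graphs) and connected general triangulations, modulo the choice of labels on tetrahedra and their vertices; I then only need to verify that the four bullet points in Definition \ref{definition:manifold_decomp} translate, under these constructions, to exactly the conditions of Lemma \ref{lemma:3mfld_tri} together with the ``1-vertex'' hypothesis.

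For the forward direction, I will start with a manifold decomposition on $4n$ nodes with $n+1$ walks and apply Construction \ref{lemma:ffpg_to_gtri} to produce a general triangulation $T$ on $n$ tetrahedra. Connectedness of $T$ follows immediately from connectedness of the fattened face pairing graph. The fact that every face of every tetrahedron is identified follows from the fact that every external arc belongs to some walk (so $T$ is closed). By Corollary \ref{cor:edge_link} the walks are in bijection with the edges of $T$, so $T$ has exactly $n+1$ edges; combined with the hypothesis that $T$ has a single vertex and the non-reversing property together with Lemma \ref{lemma:no_single_edge_reverse}, I can apply Lemma \ref{lemma:2sphere-links} (with $k=1$) to conclude that the unique vertex link is a 2-sphere. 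The non-reversing condition, via Lemma \ref{lemma:no_single_edge_reverse}, directly yields the third bullet of Lemma \ref{lemma:3mfld_tri}. Hence $T$ is a 1-vertex 3-manifold triangulation.

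For the reverse direction, I start with a 1-vertex 3-manifold triangulation $T$, choose a labelling of tetrahedra and vertices, and apply Construction \ref{lemma:gtri_to_ffpg} to obtain an ordered decomposition of the corresponding (connected) fattened face pairing graph with $4n$ nodes. The number of walks equals the number of edges of $T$ (Corollary \ref{cor:edge_link}); since $T$ is connected, closed, has $k=1$ vertex whose link is a 2-sphere, and no edge is identified with itself in reverse, Lemma \ref{lemma:2sphere-links} gives $n+1$ edges and hence $n+1$ walks. Each walk is non-reversing by Lemma \ref{lemma:no_single_edge_reverse} (again using that no edge of $T$ is identified with itself in reverse). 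The 1-vertex condition on the associated triangulation holds by assumption. So the resulting decomposition is a manifold decomposition.

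Finally, I will argue that the two constructions are inverse to one another up to relabelling: given an ordered decomposition, Construction \ref{lemma:ffpg_to_gtri} outputs a labelled triangulation whose tetrahedron/face labels are exactly those used to label the nodes $v_{i,a}$, and Construction \ref{lemma:gtri_to_ffpg} applied back to this labelled triangulation reproduces the same arcs in the same walks (since it traces edges exactly as the decomposition records). Different choices of labelling yield the same triangulation up to isomorphism, and conversely produce the same decomposition up to relabelling of the $K_4$ vertices. The one point I expect to require a small amount of care is this last bijectivity/``up to relabelling'' bookkeeping; the topological content is entirely taken care of by the earlier lemmas, so the main obstacle is simply packaging the label-equivariance of the two constructions cleanly.
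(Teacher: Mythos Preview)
Your proposal is correct and takes essentially the same approach as the paper: both invoke Constructions~\ref{lemma:ffpg_to_gtri} and~\ref{lemma:gtri_to_ffpg} for the underlying bijection, Corollary~\ref{cor:edge_link} to count edges, Lemma~\ref{lemma:2sphere-links} for the sphere-link condition, and Lemma~\ref{lemma:no_single_edge_reverse} for the non-reversing condition. Your write-up is in fact more thorough than the paper's own proof, which only spells out the manifold-decomposition-to-triangulation direction and leaves the converse and the ``up to relabelling'' bookkeeping implicit.
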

\begin{proof}
Constructions \ref{lemma:ffpg_to_gtri} and \ref{lemma:gtri_to_ffpg} 
give the
correspondence between general triangulations and ordered decompositions. All that
remains is to show that the extra properties of a manifold decomposition force
the corresponding triangulation to be a 3-manifold triangulation.
Since the decomposition contains $n+1$ walks, Corollary
\ref{cor:edge_link} tells us the triangulation has $n+1$ edges. Additionally,
each tetrahedron corresponds to four nodes in the fattened face pairing graph, so the
triangulation has $n$ tetrahedra and thus by
Lemma \ref{lemma:2sphere-links} we see that the link of each vertex is
homeomorphic to a 2-sphere. Each walk is non-reversing so Lemma
\ref{lemma:no_single_edge_reverse} says that no edge in the corresponding
triangulation is identified with itself in reverse, and we have the required
result.
\end{proof}

We now define the notation used to express specific ordered decompositions.
The notation is defined such that it can also be interpreted as a {\em spine code} (as used by Matveev's Manifold Recognizer \cite{MatveevRecog}), and that the spine generated from such a spine code is a dual representation of the same combinatorial object represented by the manifold decomposition.
For more detail on spine codes, see \cite{Matveev2007AlgorithmicTopology}.

\begin{notation}\label{not:md}
  Take an ordered decomposition of a fattened face pairing graph with $4n$ nodes,
  and label each set of three parallel external arcs with a distinct value taken from the set $\{1,\ldots,2n\}$
  (so two external arcs receive the same label if and only if they are part of
  the same triple of parallel arcs).
  Assign an arbitrary orientation to each set of three parallel external arcs.
  For each walk in the ordered decomposition:
  \begin{enumerate}
    \item Create an empty ordered list.
    \item Follow the external arcs in the walk.
      \begin{enumerate}
        \item If an external arc is traversed in a direction consistent with its orientation, add $+i$ to the end of the corresponding ordered list.
        \item If instead the arc in the walk is traversed in the reverse direction, add $-i$ to the end of the list.
        \item Continue until the first external arc in the walk is reached.
      \end{enumerate}
  \end{enumerate}
\end{notation}

See Example \ref{ex:3sphere-ffpg} for an example of the use of this notation.
Note that this notation only records the external arcs, and does not record any internal arcs in walks.

We can also reconstruct the face pairing graph (and therefore the fattened face
pairing graph) from this notation (in particular, we can reconstruct the
internal arcs).
The method essentially uses the fact that
each external arc represents some identification of two faces (and three
parallel external arcs will represent the same identification of two faces),
and so we can use the
orientation of each arc to distinguish between the two faces in each
identification and thereby build up the face pairing graph.

\begin{example}
  \label{ex:3sphere-ffpg}

The following set of walks (remember, we omit internal arcs and instead
prescribe orientations on external arcs) describes a manifold decomposition of
a 3-sphere.

\[ T = \{ (1), (1,2,4,-2,3,-4,-3,-1,3,-2), (4)\} \]

Figure \ref{fig:3sphere-decomp} shows this manifold decomposition of a
3-sphere. Given the appropriate vertex labellings, this represents the same
triangulation as that given in Example \ref{ex:3sphere}.

Each integer in $T$ represents an identification of faces, and we can also track each face in an identification individually using the sign of said integer.
For example, $-3$ is before $-1$ in the second walk, so we can say
that the ``second'' face in identification $1$ belongs to the same
tetrahedron as the ``first'' face in identification $3$. Each integer (or
its negative) appears exactly three times in an ordered decomposition, so we
can determine exactly which faces belong to the same tetrahedron. For example,
both faces involved in identification $1$ belong to the same tetrahedron as the
``first'' face in identification $2$ and the ``first'' face in identification $3$.

\begin{figure}
  \centering
  \includegraphics{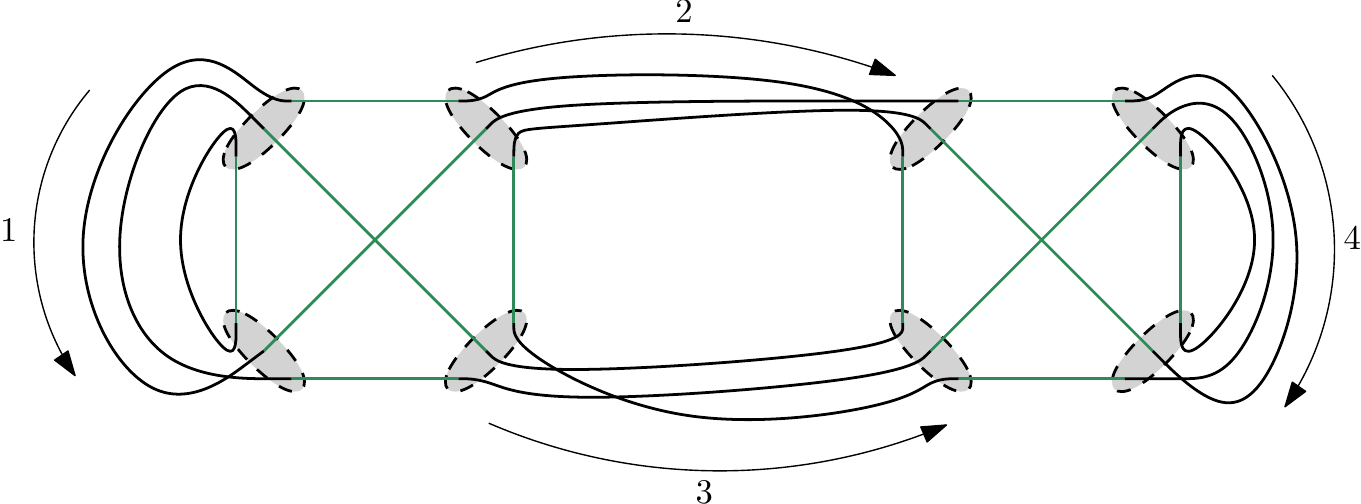}
  \caption{An edge-link decomposition of a fattened face pairing graph that
  represents a 3-sphere. Recall that each grey ellipse is actually a node in
  the fattened face pairing graph.}
  \label{fig:3sphere-decomp}
\end{figure}
\end{example}

An implementation note:
it is trivial, given a fattened face pairing graph and a
``partial'' ordered decomposition in which all the internal arcs are missing,
to reconstruct the complete ordered decomposition.
For the theoretical discussions in this paper we work with the full ordered
decompositions, but in the implementation we only store the sequential list of external arcs as in Notation \ref{not:md}.

\section{Algorithm and improvements} \label{sec:existing-results}
In this section we give various improvements that may be used when enumerating
manifold decompositions (i.e., 3-manifold triangulations).
These are based on known theoretical results in 3-manifold topology, combined with suitable data structures.

Many existing algorithms in the literature
\cite{Regina,Matveev2007AlgorithmicTopology} build triangulations by identifying
faces pairwise (or taking combinatorially equivalent steps, such as annotating
edges of special spines \cite{Matveev2007AlgorithmicTopology}).
The algorithm we give here essentially constructs the neighbourhood of each
\emph{edge} of the triangulation one at a time.
Therefore the search tree traversed by our new algorithm is significantly different than that traversed by other algorithms.
This is highlighted experimentally by the results given in Section \ref{sec:results}.

\subsection{Algorithm}\label{sec:algo}
The basis of our implementation is a simple backtracking approach to enumerate manifold decompositions.
First each set of three parallel arcs is given an arbitrary orientation.
Then each set of three parallel arcs is given a distinct value from the set
${1,\ldots,n}$ (so two external arcs receive the same label if and only if they
are part of the same triple of parallel arcs).
The algorithm then finds the arc $e$ with the lowest such value that is not already in some walk.
A new walk is started with $e$ being used in the ``forwards'' direction.
As seen in Figure \ref{fig:algorithm}, $e$ is incident to some node $n_1$ at its ``head''. There are three choices for the next arc in the walk, corresponding to the three internal arcs incident to $n_1$. The algorithm will try each of these three, one at a time, as long as said internal arc has not already been used in a walk.
Assume internal arc $i$ is used, and that $i$ is the arc between nodes $n_1$ and $n_2$. Then the next step the algorithm takes is to check whether this can be the last arc in the walk. That is, was the first initial arc also incident upon $n_2$. If so, finish this walk and attempt to create the next walk (or if all arcs have been used, determine if the current set of walks comprises an ordered decomposition).
After attempted to complete the walk (and regardless of whether the walk could be completed or not), one of the three parallel arcs incident to $n_2$, say $e_2$, is used next in the walk. Since these are parallel, there is no need to differentiate between them. Additionally, since the internal arc $i$ was not used in a walk, we know that at least one of these three arcs has not been used in a walk.
The algorithm then adds $e_2$ to the current walk and continues the process until all possible ordered decompositions have been found.

\begin{figure}
  \centering
  \includegraphics{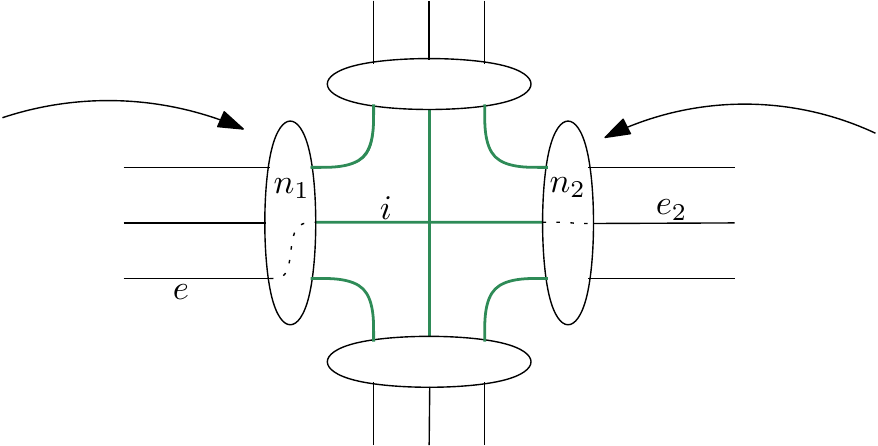}
  \caption{Partial walk being built as in the algorithm. The large arrows indicate the orientation assigned to each set of three parallel external arcs.  In this diagram, $e$ was the starting arc. The choice of $i$ is shown. Note that all three possible choices for $e_2$ are equivalent. Also since the orientation on $e_2$ is ``backwards'', the new partial walk would be $\mathcal{P}\lhd (i,-e_2)$.}
  \label{fig:algorithm}
\end{figure}

However, this is approach is not tractable for any interesting values of $n$, and so
we introduce the following improvements.

\subsection{Limiting the size of walks}
Enumeration algorithms \cite{Burton2004,Burton2007,Regina,Martelli2001,Matveev1998,Matveev2007AlgorithmicTopology} in 3-manifold topology often focus on closed, minimal, irreducible and $\p$-irreducible 3-manifold triangulations.
These properties were all defined in Section \ref{sec:notation}.
For brevity, we say that a triangulation (or manifold decomposition) has such a
property if and only if the underlying manifold has the property.

The following results are taken from \cite{Burton2004}, though in the orientable
case similar results were known earlier by other authors
\cite{Martelli2001,Matveev2007AlgorithmicTopology}.

\begin{lemma}
  {\em (2.1 in \cite{Burton2004})}
No closed minimal triangulation has an edge of degree three that belongs to
three distinct tetrahedra.
\end{lemma}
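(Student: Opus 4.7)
The plan is to prove the lemma by contradiction via the classical 3-to-2 bistellar (Pachner) move. Suppose $T$ is a closed minimal triangulation on $n$ tetrahedra containing an edge $e$ of degree three incident to three distinct tetrahedra $\Delta_1,\Delta_2,\Delta_3$. I will exhibit a triangulation of the same 3-manifold on $n-1$ tetrahedra, contradicting minimality.

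First I would describe the local picture. Because $\deg(e)=3$ and the three incident tetrahedra are distinct, the union $B = \Delta_1 \cup \Delta_2 \cup \Delta_3$, glued as in $T$ along the three internal faces that meet $e$, is combinatorially a triangular bipyramid with $e$ as its axis; the boundary $\partial B$ consists of six triangles, two from each $\Delta_i$, none of which lies in the interior of $B$. The same bipyramid admits a second natural triangulation into two tetrahedra $\Delta'_1,\Delta'_2$ glued along the ``equatorial'' triangle spanned by the three vertices of $B$ opposite $e$. Cutting $\Delta_1,\Delta_2,\Delta_3$ out of $T$ and reinserting $\Delta'_1\cup\Delta'_2$, matched to the ambient triangulation along the same six boundary triangles of $B$, produces a general triangulation $T'$ on $n-1$ tetrahedra whose underlying topological space is homeomorphic to that of $T$.

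The main obstacle is to verify that $T'$ is actually a valid 3-manifold triangulation, so that minimality is genuinely contradicted rather than only formally so. Invoking Lemma~\ref{lemma:3mfld_tri}, I would check that $T'$ is connected, has every vertex link a sphere or disc, and has no edge identified with itself in reverse. Since the move is supported inside the embedded ball $B$ and fixes $\partial B$ pointwise, connectedness and the vertex-link condition are inherited from $T$; the only genuinely new edge introduced is the axis shared by $\Delta'_1$ and $\Delta'_2$, and since its endpoints are the two apices of the original bipyramid, which are distinct vertices of $T$, this edge cannot be self-identified in reverse. The crucial place where the hypotheses do real work is in guaranteeing that $B$ is embedded in $T$: if two of the $\Delta_i$ coincided, or if two outer faces of distinct $\Delta_i$ were identified in $T$, the boundary $\partial B$ would exhibit internal collisions and the substitution could both fail to describe the same manifold and fail to satisfy Lemma~\ref{lemma:3mfld_tri}. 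Distinctness of $\Delta_1,\Delta_2,\Delta_3$, together with closedness of $T$, is exactly what rules this out. With embeddedness in hand, $T'$ triangulates the same manifold on $n-1$ tetrahedra, contradicting minimality of $T$.
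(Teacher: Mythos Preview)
The paper does not prove this lemma; it simply cites \cite{Burton2004}. Your approach via the 3--2 Pachner move is indeed the standard argument, and the overall strategy is correct. However, two of the details you supply are wrong, and one of them is precisely the step you flag as ``the crucial place where the hypotheses do real work.''

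First, distinctness of $\Delta_1,\Delta_2,\Delta_3$ does \emph{not} rule out identifications among the six outer faces of the bipyramid. In the general triangulations of this paper (which are not simplicial complexes) two of those outer faces may well be glued to each other in $T$; nothing in the hypotheses forbids it, and in minimal triangulations such self-gluings are ubiquitous. Fortunately your proof does not actually need $B$ to be embedded: the 3--2 move is a purely combinatorial substitution of one bipyramid triangulation for another, and the six boundary triangles of the new pair $\Delta'_1\cup\Delta'_2$ are matched bijectively to the six boundary triangles of $\Delta_1\cup\Delta_2\cup\Delta_3$ with the same vertex labels, so whatever identifications $T$ imposed on those faces carry over verbatim to $T'$. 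The underlying space is unchanged regardless of whether $\partial B$ was embedded.

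Second, your description of the ``new edge'' is backwards. The two new tetrahedra $\Delta'_1,\Delta'_2$ share the equatorial \emph{face}, not an axis between the apices; the edge $e$ between the apices is precisely what the move \emph{deletes}. In fact the 3--2 move introduces no new triangulation edges at all: every edge of $T'$ is already an edge of $T$ (with $e$ removed), so the ``no edge reversed'' condition is inherited immediately from $T$ without any appeal to the apices being distinct vertices---which, incidentally, they typically are not in the one-vertex triangulations that are the focus of this paper.
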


\begin{lemma}
  {\em (2.3 and 2.4 in \cite{Burton2004})}
  No closed minimal $\p$-irreducible triangulation with $\geq 3$ tetrahedra
  contains an edge of degree $\leq 2$.
\end{lemma}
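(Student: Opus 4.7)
The plan is to prove the result by contradiction, handling the degree~1 and degree~2 cases separately and producing either a forbidden topological object (a non-trivial embedded $2$-sphere or projective plane) or a smaller triangulation of the same manifold. Throughout, the hypothesis that $T$ is closed, minimal, $\p$-irreducible with $n \geq 3$ tetrahedra will be invoked to rule out degenerate cases.

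For degree~$1$: suppose $e$ is an edge of $T$ with $\deg(e)=1$, so $e$ arises from a single tetrahedron edge $ab$ in some $\Delta$. The two faces of $\Delta$ meeting $ab$, namely face $c = \face{abd}$ and face $d = \face{abc}$, must be identified with each other in such a way that $ab$ is fixed pointwise (reversing is ruled out by Lemma~\ref{lemma:3mfld_tri}). I would then examine the quotient of $\Delta$ by this self-identification: its frontier is the union of the two remaining faces of $\Delta$, meeting along a single (identified) edge, and forms an embedded $2$-sphere $S$ in $T$. By $\p$-irreducibility, $S$ bounds a $3$-ball $B$ on one side. Because $T$ is closed and has $n\geq 3$ tetrahedra, the opposite side of $S$ is not a $3$-ball (else $T$ would triangulate $S^3$ with a much smaller number of tetrahedra, contradicting minimality), so one can replace the contents of $B$ by a simpler triangulation and obtain a triangulation of the same manifold with strictly fewer tetrahedra, contradicting minimality.

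For degree~$2$: let $e$ be an edge with $\deg(e)=2$. Then two tetrahedron edges are identified to form $e$, and the neighbourhood of $e$ is a bipyramid formed from two tetrahedral corners glued along the pair of triangles meeting at $e$. Its frontier is an embedded $2$-sphere $S'$; by $\p$-irreducibility, $S'$ bounds a $3$-ball, and the bipyramid itself is already a $3$-ball. I would then perform a standard $2$-$0$ edge move (collapsing the bigon link of $e$) to flatten the bipyramid into a disc, reducing the tetrahedron count by two. Provided the collapse yields a valid triangulation, this contradicts minimality. The small manifolds where such a collapse instead changes the topology (e.g.~$S^3$, $\mathbb{RP}^3$, $L_{3,1}$) correspond to triangulations on $\leq 2$ tetrahedra, which are excluded by $n\geq 3$.

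The main obstacle is the careful case analysis needed when the tetrahedra participating in the neighbourhood of $e$ are not all distinct, or when they share additional face identifications with each other or with themselves. In those subcases, the bounding sphere may degenerate, or the naive collapse may create illegal identifications (edges identified with themselves in reverse, or merged vertices that produce non-sphere links). Ruling these out requires a delicate inspection of how $\p$-irreducibility forces a ball on the correct side of the bounding sphere, and verifying that the simplified triangulation still satisfies the three conditions of Lemma~\ref{lemma:3mfld_tri}.
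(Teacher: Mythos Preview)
The paper does not prove this lemma at all: it is quoted verbatim as Lemmas~2.3 and~2.4 of \cite{Burton2004} and used as a black box. So there is no ``paper's own proof'' to compare against; the correct benchmark is the argument in Burton's 2004 paper.

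Your outline follows exactly the strategy used there: for a degree-$1$ edge, the two faces through it fold together, the remaining two faces of that tetrahedron form a $2$-sphere, and irreducibility lets you excise and simplify; for a degree-$2$ edge, the two tetrahedra around it form a bipyramid that can be flattened by a $2\!-\!0$ edge move, dropping the tetrahedron count by two. You have also correctly identified where the real work lies, namely the degenerate configurations in which the participating tetrahedra or faces coincide, since these are precisely the subcases where the ``sphere'' may fail to be embedded or the collapse may produce an invalid triangulation. One small point: in your degree-$2$ sketch the appeal to $\p$-irreducibility for the sphere $S'$ is not doing any work in the generic case, since the bipyramid side is already a ball; irreducibility and $\p$-irreducibility are only needed in the degenerate subcases to rule out the move producing a different manifold (or to conclude directly that the manifold is $S^3$, $\mathbb{RP}^3$, etc.). Apart from that, your plan is the standard one and, once the case analysis is filled in, would reproduce the cited result.
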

Given the degree of an edge $e$ of a triangulation is the number of
tetrahedron edges which are identified to form $e$, these results
translate to manifold decompositions as follows.

\begin{corollary}\label{cor:degree1or2}
  No closed minimal $\p$-irreducible manifold decomposition with $\geq 3$
  tetrahedra contains a walk which itself contains less than three external arcs.
\end{corollary}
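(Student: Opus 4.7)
The plan is to translate the statement about walks directly into a statement about edge degrees in the associated triangulation, and then apply the preceding lemma (2.3 and 2.4 of \cite{Burton2004}).

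First I would invoke Corollary \ref{cor:edge_link}: each walk $P_i$ in a manifold decomposition corresponds to a unique edge $e$ of the associated triangulation, with $|P_i| = 2\deg(e)$. Next I would use the third condition in Definition \ref{definition:ordered-decomp}, which forces walks to strictly alternate between internal and external arcs. Consequently exactly half of the arcs in $P_i$ are external, so the number of external arcs in $P_i$ equals $\deg(e)$.

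Now suppose, for contradiction, that a closed, minimal, $\p$-irreducible manifold decomposition with $n \geq 3$ tetrahedra contains a walk $P$ with fewer than three external arcs. Then by the above the corresponding edge $e$ of the triangulation satisfies $\deg(e) \leq 2$. By Theorem \ref{thm:md_equiv_3tri} the decomposition corresponds to a closed, minimal, $\p$-irreducible $3$-manifold triangulation on $n \geq 3$ tetrahedra, which directly contradicts Lemma 2.3/2.4 of \cite{Burton2004} (the preceding lemma in this section), stating that no such triangulation contains an edge of degree at most two.

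There is no real obstacle here; the corollary is essentially a dictionary translation of an existing topological result into the new combinatorial framework. The only mild subtlety is ensuring the alternation condition is applied correctly, so that ``number of external arcs'' really does equal $\deg(e)$ rather than $2\deg(e)$ or $\deg(e)-1$; this is handled by noting that a closed walk of even length alternating between two arc types splits evenly between the two.
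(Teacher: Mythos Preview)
Your argument is correct and matches the paper's approach: the paper simply states that this is a direct corollary obtained by translating the edge-degree result into the decomposition language, and your use of Corollary~\ref{cor:edge_link} together with the alternation condition to equate the number of external arcs with $\deg(e)$ is exactly that translation.
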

\begin{corollary}\label{cor:degree3}
  No closed minimal manifold decomposition contains a walk which itself
  contains exactly three internal arcs representing edges on distinct
  tetrahedra (i.e., belonging to three distinct $K_4$ subgraphs).
\end{corollary}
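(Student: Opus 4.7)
The plan is to reduce Corollary \ref{cor:degree3} to Lemma 2.1 of \cite{Burton2004} via the dictionary between manifold decompositions and 3-manifold triangulations already established in this section. The translation is essentially a matter of reading off what a walk with exactly three internal arcs tells us about the associated edge and the tetrahedra around it.

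First I would invoke Corollary \ref{cor:edge_link}: each walk $P_i$ corresponds to a unique edge $e$ of the triangulation and has length $2\deg(e)$. By Definition \ref{definition:ordered-decomp}, the arcs of a walk alternate between internal and external, so a closed walk of length $2\deg(e)$ contains exactly $\deg(e)$ internal arcs and $\deg(e)$ external arcs. Hence a walk with exactly three internal arcs corresponds to an edge $e$ of degree three in the triangulation.

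Next, I would use the semantic interpretation of internal arcs given in Section \ref{sec:decomp}: the internal arc $\{v_{i,a},v_{i,b}\}$ in the $K_4$ for tetrahedron $i$ represents the unique tetrahedron edge common to faces $a$ and $b$ of that tetrahedron. Consequently, each of the three internal arcs in the walk $P_i$ represents a specific tetrahedron edge, and the $K_4$ subgraph it lies in identifies which tetrahedron carries that edge. If the three internal arcs lie in three distinct $K_4$s, then the three tetrahedron edges that are identified to form $e$ belong to three distinct tetrahedra.

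Finally, by Theorem \ref{thm:md_equiv_3tri}, a manifold decomposition corresponds to a 1-vertex 3-manifold triangulation, and being a closed minimal manifold decomposition means the same for the triangulation. Thus the assumption produces a closed minimal 3-manifold triangulation with an edge of degree three whose three incident tetrahedron edges lie in three distinct tetrahedra, which is exactly forbidden by Lemma 2.1 of \cite{Burton2004}. The only mild subtlety is making sure the alternation condition forces exactly $\deg(e)$ internal arcs (rather than, say, $\deg(e)\pm 1$), but this is immediate from the third bullet of Definition \ref{definition:ordered-decomp} together with the walk being closed of even length. No substantial obstacle is expected.
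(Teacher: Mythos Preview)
Your proposal is correct and follows exactly the approach the paper takes: the paper states that Corollary~\ref{cor:degree3} is a ``direct corollary'' obtained by translating Lemma~2.1 of \cite{Burton2004} into the manifold-decomposition language via Corollary~\ref{cor:edge_link} and the interpretation of internal arcs as tetrahedron edges. Your write-up simply spells out that translation explicitly.
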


The above results are direct corollaries, as it is simple to translate 
the terms involved and the results are simple enough to implement in an
algorithm. 
In the backtracking algorithm, this means we can implement a check on the number of arcs
in a walk before added the walk to the decomposition.
This is implementable as a constant time check if the length of the current partial walk is stored.

Additionally, for a census of 1-vertex triangulations on $n$ tetrahedra, a manifold decomposition must contain exactly $n+1$ walks.
If the algorithm has completed $k$ walks, then there are $n+1-k$ walks left to complete.
Each such walk must contain at least three external arcs, so if there are less than $3(n+1-k)$ unused external arcs, the current subtree of the search space can be pruned.
\begin{improvement}\label{impro:3x_arcs_remaining}
  If during the enumeration process $k$ walks have been completed and there are less than $3(n+1-k)$ unused external arcs, prune the search tree at this point.
\end{improvement}

We extend this result one step further.
By Corollary \ref{cor:degree3} a closed walk in a manifold decomposition which contains three internal arcs must contain two internal arcs belonging to the same $K_4$, as in Figure \ref{fig:3-walk}.
We modify our algorithm to enumerate all such closed walks first.
Each such walk is either present or absent in any manifold decomposition.
For each possible combination of such walks, we fix said walks and then run the search on the remaining arcs.
All other walks must now contain at least four external arcs, so during the
census on $n$ tetrahedra if the algorithm has completed $k$ walks and there
are less than $4(n+1-k)$ unused external arcs we know that the partial decomposition
cannot be completed to a manifold decomposition.
\begin{improvement}\label{impro:4x_arcs_remaining}
  For each $K_4$ in the given graph, determine if two of its internal arcs can be used together in a walk containing exactly three internal arcs.
  If this is possible, add said walk to the set $S$.
  Then, for each subset $s \subseteq S$, use $s$ as a starting set of walks and attempt to complete the ordered decomposition.
  If during the enumeration process $k$ walks have been completed and there are less than $4(n+1-k)$ unused external arcs, prune the search tree at this point.
\end{improvement}

\begin{figure}
  \centering
  \includegraphics[scale=0.7]{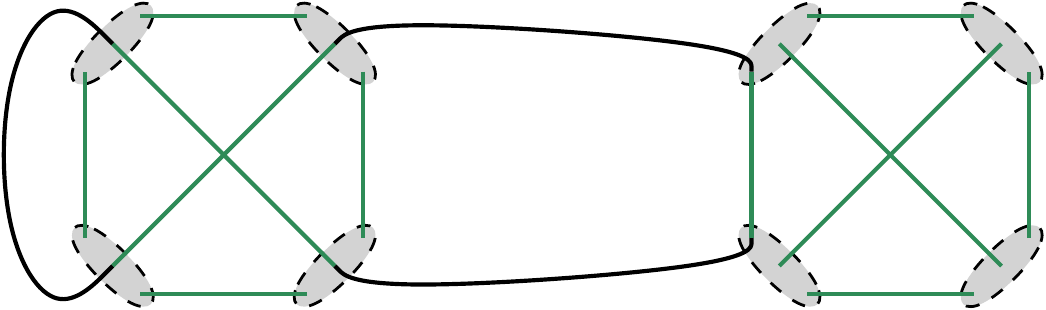}
  \caption{The only possible walk containing 3 internal arcs not all from distinct tetrahedra in a fattened face pairing graph on more than 1 tetrahedron. Only the external arcs used in the walk are shown, other external arcs are not shown.}
  \label{fig:3-walk}
\end{figure}

\subsection{Avoiding cone faces}
For some properties of minimal triangulations, it is not clear that
the corresponding tests can be implemented cheaply.
Here we identify further results from the literature that enable fast
implementations in our setting.
The following was shown in \cite{Burton2004}.

\begin{lemma}
  {\em (2.8 in \cite{Burton2004})}
  Let $T$ be a closed minimal $\p$-irreducible triangulation containing $\geq
  3$ tetrahedra. Then no single face of $T$ has two of its edges identified to
  form a cone as illustrated in Figure \ref{fig:one-face-cone}.
\end{lemma}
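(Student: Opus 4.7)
The plan is to argue by contradiction. Suppose $T$ is closed, minimal, $\p$-irreducible with at least three tetrahedra, yet contains a face $F$ with vertices $a$, $b$, $c$ whose edges $ab$ and $cb$ are identified so that the apex $b$ is fixed and $a$ is glued to $c$. First I would record the combinatorial consequences in $T$: the two tetrahedron edges $ab$ and $cb$ collapse to a single edge $e$ of $T$, the vertices $a$ and $c$ are identified to one vertex of $T$, and the third edge $ca$ of $F$ descends to a loop at that vertex. Since $T$ is closed, each side of $F$ is glued to some tetrahedron face, and the pair of tetrahedra bounding $F$ (which may coincide) will drive the case analysis.

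The key geometric step is to build an embedded surface $S$ near $F$: roughly, one takes the disc $F$ together with the link of its apex $b$ and a suitable annular piece coming from the link of $e$, and shows that these close up into an embedded 2-sphere (or, in degenerate configurations, a projective plane). The $\p$-irreducibility hypothesis immediately rules out the projective plane case, and then irreducibility forces the 2-sphere to bound an embedded 3-ball $B$ in $T$. The next step is to crush $B$ in the sense of Jaco--Rubinstein, producing a triangulation of the same underlying manifold with strictly fewer tetrahedra and so contradicting the minimality of $T$. Several degenerate configurations must be dispatched before the crushing applies cleanly: if $B$ contains no interior tetrahedra, the cone forces an edge of degree one, two, or three in three distinct tetrahedra, all already excluded by the previously quoted Lemmas 2.1, 2.3, and 2.4 of \cite{Burton2004}; if $B$ equals $T$, then $T$ is a very small triangulation of $S^3$, which is not a minimal $\p$-irreducible entry on at least three tetrahedra; and the subcase where both sides of $F$ belong to the same tetrahedron needs a direct check that some low-degree edge must appear.

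The main obstacle I anticipate is the bookkeeping for the crushing step: verifying that the constructed 2-sphere is genuinely embedded and two-sided, that the ball $B$ it bounds contains at least one tetrahedron which actually disappears after crushing, and that the resulting object is a valid 3-manifold triangulation rather than one with non-sphere vertex links or an unexpected reduction in the vertex count. This is also precisely where $\p$-irreducibility, rather than ordinary irreducibility, is needed, since reversing the orientation of the identification between $ab$ and $cb$ produces a Mobius-band-like neighbourhood whose frontier is a projective plane; only the stronger hypothesis forbids this alternative and lets the crushing argument close.
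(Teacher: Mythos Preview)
The paper does not contain a proof of this lemma at all: it is quoted verbatim as Lemma~2.8 of \cite{Burton2004} and used as a black box, with the paper's own contribution being only the translation into the manifold-decomposition language in the lemma that follows. So there is no proof here against which your proposal can be compared.

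That said, your outline is broadly in the right spirit for how such minimality results are established in the cited literature: one locates a small embedded $2$-sphere (or projective plane) near the offending configuration, invokes ($\p$-)irreducibility to bound a ball, and then performs a combinatorial reduction contradicting minimality. Your identification of the degenerate subcases and of where $\p$-irreducibility, as opposed to mere irreducibility, is genuinely needed is also on target. The part of your sketch that is least convincing as written is the actual surface construction: ``the disc $F$ together with the link of its apex $b$ and a suitable annular piece'' is too vague to guarantee an embedded $2$-sphere, and in practice the argument in \cite{Burton2004} proceeds via a more direct local simplification (a Pachner-type or layering move across the cone face) rather than a full Jaco--Rubinstein crushing. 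If you want to reconstruct the proof, the cleanest route is to exhibit an explicit elementary move that removes a tetrahedron adjacent to the cone face, after first using the low-degree-edge lemmas to rule out the small cases.
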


\begin{figure}[h]
  \centering
  \includegraphics{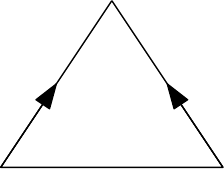}
  \caption{A one-face cone formed by identifying the two marked edges.}
  \label{fig:one-face-cone}
\end{figure}

For manifold decompositions, our translation of this result also requires the
underlying manifold to be orientable in order to give a fast algorithmic test.

\begin{lemma}
  Let $D$ be a closed minimal $\p$-irreducible manifold decomposition of an orientable
  manifold containing $\geq 3$ tetrahedra. Then no walk of $D$ can
  use two parallel external arcs in opposite directions (as seen in Figure \ref{fig:BothDirections}).
\end{lemma}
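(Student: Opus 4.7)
The plan is to argue by contradiction via the correspondence of Theorem~\ref{thm:md_equiv_3tri}. Suppose some walk $P_x$ of $D$ contains two parallel external arcs $e_1, e_2$ traversed in opposite directions. The triple of parallel arcs containing $e_1$ and $e_2$ corresponds by Construction~\ref{lemma:ffpg_to_gtri} to a single face identification, say $\face{abc} \ident \face{efg}$ between face $\face{abc}$ of tetrahedron $i$ and face $\face{efg}$ of tetrahedron $j$ (possibly $i=j$). Each of the three parallel arcs represents one of the three induced tetrahedron-edge identifications $bc \ident fg$, $ac \ident eg$, $ab \ident ef$.

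First I would use Corollary~\ref{cor:edge_link} to conclude that every tetrahedron edge represented by an arc in $P_x$ is part of a single edge $\varepsilon$ of the triangulation. Since $e_1$ and $e_2$ are distinct parallel arcs both lying in $P_x$, two distinct edges of face $\face{abc}$ (and correspondingly two of $\face{efg}$) are collapsed to $\varepsilon$, so $\face{abc}$ has two of its edges identified to one another in the triangulation.

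Next I would translate the ``opposite directions'' condition into the statement that the two identified edges of $\face{abc}$ share a common vertex, so that the identification produces a cone in the sense of Lemma~2.8 of \cite{Burton2004}. Each external arc is incident at one end to face $\face{abc}$ and at the other end to face $\face{efg}$, and the chosen orientation on the triple records which is the ``head'' side. For a walk, the alternating internal/external structure forces the walk to enter a node along one arc and exit along the next; so if $e_1$ and $e_2$ are traversed in the walk in opposite directions, then on the $\face{abc}$ side one arc is being left by the walk and the other is being entered by the walk, meaning that the internal arcs adjacent to $e_1$ and $e_2$ inside the $K_4$ for tetrahedron $i$ meet at a common node, i.e.\ the two identified edges of $\face{abc}$ share a common tetrahedron vertex. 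Orientability is used here to rule out the alternative configuration: in an orientable triangulation the induced identification of the edges of $\face{abc}$ cannot reverse orientation of the face, which forces the direction-of-traversal bookkeeping to match up as a cone rather than as a M\"obius-type identification.

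Having exhibited a face with two edges glued to form a cone as in Figure~\ref{fig:one-face-cone}, the hypotheses of Lemma~2.8 of \cite{Burton2004} apply (closed, minimal, $\p$-irreducible, $\geq 3$ tetrahedra), yielding the required contradiction. The main obstacle I expect is the middle step: carefully tracking the arbitrary orientation on a triple of parallel arcs together with the direction of traversal in $P_x$, and confirming that ``opposite directions'' is exactly the combinatorial signature of the two induced edge identifications meeting at a vertex. A short case analysis on how $e_1$'s and $e_2$'s internal-arc neighbours in $P_x$ sit inside the $K_4$ for tetrahedron $i$ should make this transparent, and is the only place the orientability hypothesis is genuinely used.
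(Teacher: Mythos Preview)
Your overall strategy matches the paper's: reduce to a cone face in the sense of Lemma~2.8 of \cite{Burton2004} and derive a contradiction. However, the middle step as written does not work. All three parallel external arcs are incident to the \emph{same} node $v_{i,d}$ (the node for face $\face{abc}$), so the internal arcs paired with $e_1$ and $e_2$ at that node automatically meet there; likewise any two edges of the triangle $\face{abc}$ share a vertex. Both of the conclusions you draw from ``opposite directions'' are therefore true for \emph{any} two parallel external arcs in the same walk, regardless of direction, and so they cannot be what separates the cone from the M\"obius identification.

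What actually needs to be shown is that the identification of the two edges of $\face{abc}$ fixes their common vertex (cone) rather than swapping it away (M\"obius). The paper does this by putting an orientation on the triangulation edge represented by $P_x$ and using orientability of the manifold (a right-hand rule around that edge) to carry a consistent transverse orientation all the way around the link. The ``opposite directions'' of $e_1$ and $e_2$ then translate into the hand entering and leaving tetrahedron $i$ through the same face $\face{abc}$ with the same chirality, which forces the two tetrahedron edges on $\face{abc}$ to have the same vertex at their positive end; that is the cone. Your final sentence gestures at this (``a short case analysis \ldots\ is the only place the orientability hypothesis is genuinely used''), but the preceding paragraph does not set it up correctly: the case analysis is not on how the internal arcs sit in the $K_4$, but on how the global orientation of the edge link interacts with the in/out direction through face $\face{abc}$.
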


\begin{figure}
  \centering
  \includegraphics[scale=0.7]{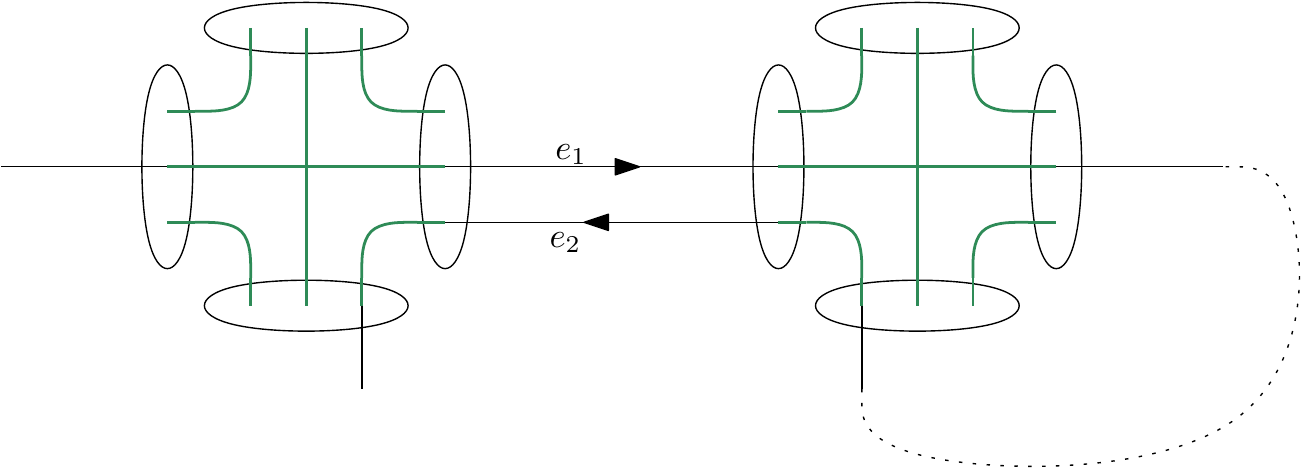}
  \caption{The depicted walk cannot occur in a closed minimal $\p$-irreducible orientable manifold decomposition as external arcs $e_1$ and $e_2$ are used in opposite directions. The dotted lines indicates the walk continues through undrawn parts of the fattened face pairing graph.}
  \label{fig:BothDirections}
\end{figure}

\begin{proof}
Recall that by
our definition, if some walk $P_x$ of a manifold decomposition contains the
sequence of arcs
$(\{v_{i,a},v_{i,b}\},\{v_{i,a},v_{j,c}\})$ then
face $a$ of tetrahedron $i$ is
identified with face $c$ of tetrahedron $j$.
Assume towards a contradiction that we also have the sequence of arcs
$(\{v_{i,a},v_{j,c}\},\{v_{i,a},v_{i,d}\})$ in the
walk $P_x$ somewhere, such that the parallel arcs of the form
$\{v_{i,a},v_{j,c}\}$ are used in the walk in both directions.

\begin{figure}
  \centering
  \subfloat[]{\includegraphics[scale=1.0]{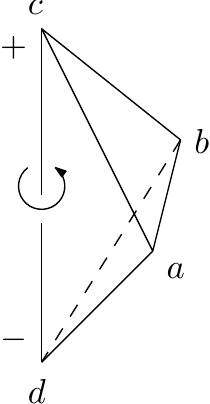}\label{fig:RightHand-a}}
  \qquad
  \subfloat[]{\includegraphics[scale=1.0]{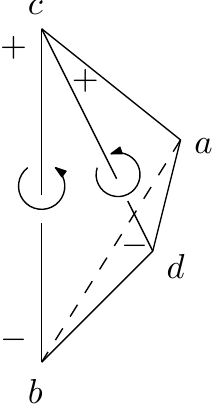}\label{fig:RightHand-b}}
  \caption{A tetrahedron in an oriented triangulation. The circular line with the arrow head indicates a ``right hand'' gripping the edge. In (a) the edge $cd$ is given an orientation. If two parallel external arcs are used in opposite directions (as seen in Figure \ref{fig:BothDirections}) then (b) must occur. Note that since the triangulation is oriented, the labelling of (b) is forced, given the labelling of (a).}
  \label{fig:RightHand}
\end{figure}

Affix some orientation onto the edge of the manifold represented by $P_x$, and
consider the ring of tetrahedra surrounding this edge.
Since we have an
orientable manifold, we can make use of a ``right-hand'' rule. See Figure \ref{fig:RightHand} for a visual aid.
Imagine a right
hand inside tetrahedron $i$, gripping edge $cd$ (represented by $\{v_{i,a},v_{i,b}\}$) such that the
thumb points towards the positive end of the edge and the fingers curl around
the edge so that they leave the tetrahedron through face $a$ (see Figure \ref{fig:RightHand-a}). Since the manifold
is orientable, any time this hand is back inside tetrahedron $i$ it must have
this same orientation. Now since $\{v_{i,a},v_{i,b}\}$ preceded
$\{v_{i,a},v_{j,c}\}$ in the walk and the fingers curl ``out'' through
face $a$ of tetrahedron $i$, if some other arc $\{v_{i,a},v_{i,d}\}$ succeeds
arc $\{v_{i,a},v_{j,c}\}$ then the fingers must curl ``in'' through face
$a$ of tetrahedron $i$ as the hand grips edge $cd$. As yet, this is no contradiction, as the hand is
gripping the one edge of the triangulation, but is therefore gripping many
edges of tetrahedra. However, this necessarily leads to these two edges of
tetrahedra having the same common vertex as their ``positive'' end (see Figure \ref{fig:RightHand-b}). 
Then face $a$ has two edges identified as in \ref{fig:one-face-cone}, contradicting Lemma \ref{lemma:no_single_edge_reverse}.
\end{proof}

This result leads to the following.

\begin{improvement}\label{impro:no_reverse}
When enumerating \emph{orientable} manifold decompsitions,
if an external arc $e$ is to be added to some walk $W$, and $e$ is parallel to
another external arc $e'$ which itself is in $W$, check whether $e$ and $e'$
will be used in opposite directions. If so, do not use $e$ at this point;
instead backtrack and prune the search tree.
\end{improvement}

\subsection{One vertex tests}\label{sec:1vtx}
Definition \ref{definition:manifold_decomp} requires that the associated manifold only
have one vertex. We test this by tracking properties of the vertex links
as the manifold decomposition (i.e., triangulation) is built up.
Specifically, while the manifold decomposition is still being constructed,
no vertex link may be a closed surface.

\begin{figure}
  \centering
  \includegraphics[scale=0.9]{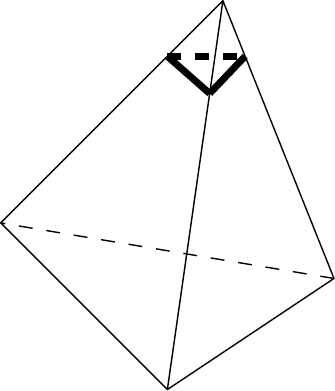}
  \caption{A tetrahedron, with the link of the top vertex drawn in heavier lines. This link, when triangulated, is homeomorphic to a disc. Each of the three heavier lines is a frontier edge.}
  \label{fig:Link}
\end{figure}
Initially, the link of each vertex may be
triangulated as a single triangular face, and therefore has 3 frontier edges.
Each time an external arc is used in a walk, two edges in the
triangulation are identified together, and as a result two frontier edges are identified
together (see Figure \ref{fig:IdentifyEdge}).
\begin{figure}
  \centering
  \includegraphics[scale=0.9]{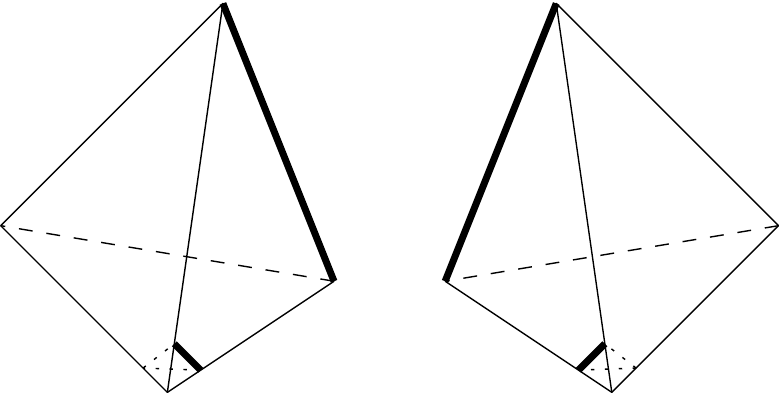}
  \caption{When the two edges of the two tetrahedra (long thick lines) are identified, we also know that the two frontier edges (short thick lines) will be identified.}
  \label{fig:IdentifyEdge}
\end{figure}

The orientation of this
identification is not known, but is also not required. We only require that the
triangulation only have one vertex and we do this by tracking how many frontier edges are in each link.
When frontier edges are identified together, the two edges either belong to the
same link, or to two distinct links.
If the two frontier edges belong to the same link (see Figure \ref{fig:LinkIdentSame}), the number of
frontier edges in the link is reduced by two.
However if the frontier edges belong to two distinct links (see Figure \ref{fig:LinkIdentDiff}), with $l_a$ and $l_b$
frontier edges respectively, the resulting link has $l_a + l_b - 2$ frontier
edges.  Note that after this identification, two links have been joined
together so we must not just track the number of frontier edges, but also which
links have been identified.

\begin{figure}
  \centering
  \subfloat[]{\includegraphics[scale=0.5]{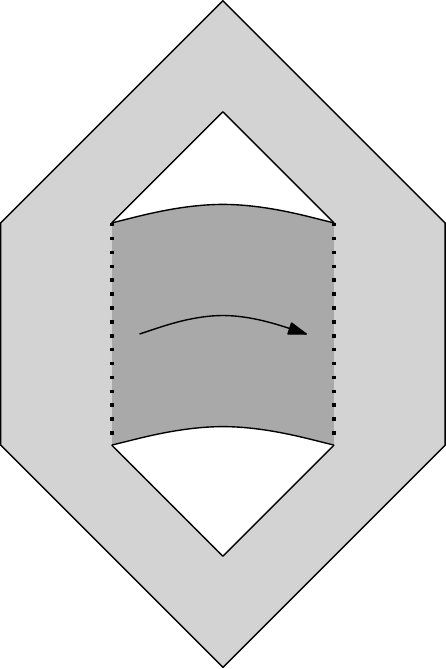}\label{fig:LinkIdentSame}}
  \qquad
  \subfloat[]{\includegraphics[scale=0.8]{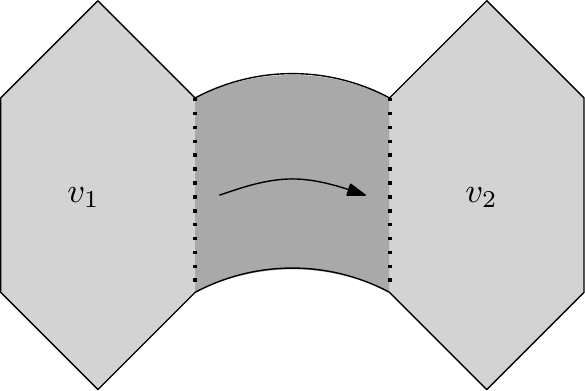}\label{fig:LinkIdentDiff}}
  \caption{Two possibilities when identifying frontier edges of link vertices. The dark grey and the arrow indicate the two edges identified. In (a) the two frontier edges belong to the same vertex link, whereas in (b) the two edges belong to two different vertex links. Note that we are only interested in the number of frontier edges in the link, not its shape or the orientation of any identification.}
  \label{fig:LinkIdent}
\end{figure}

Once a vertex link has no frontier edges, it is a closed surface.
If any other distinct vertex links exist, we know that the triangulation must have more than one vertex, which gives the following.

\begin{improvement}\label{impro:1vtx}
  When building up a manifold decomposition, track how many ``frontier edges'' remain around each vertex link.
  If any vertex links are closed off before the manifold decomposition is completed,
  backtrack and prune the current subtree of the search space.
\end{improvement}

The number of frontier edges of each vertex link, as well as which vertex links are identified together, are tracked via a union-find data structure.
The data structure is slightly tweaked to allow back tracking (see \cite{Burton2007} for details), storing the
number of frontier edges at each node.
For more details on the union-find algorithm in general, see \cite{Sedgewick1992}.

\subsection{Canonicity and Automorphisms}

When running a search, many equivalent manifold decompositions will be found.
These decompositions may differ in the order of the walks found,
or two walks might have different starting arcs or directions.
For example, the two walks $(a,b,c)$ and $(-b,-a,-c)$ are equivalent.
The second starts on a different arc, and traverses the walk backwards, but neither of these change the manifold decomposition.
Additionally, the underlying face pairing graph often has non-trivial automorphism group.

To eliminate such duplication, we only search for {\em canonical} manifold decompositions.
We use the obvious definition for a canonical walk (lowest-index arc is written first and is used in the positive direction).

\begin{definition}
  A walk $P=(x_1,x_2,\ldots,x_m)$ in an ordered decomposition is semi-canonical if
  \begin{itemize}
    \item $x_1 > 0$ ; and
    \item $|x_1| \leq |x_i|$ for $i=2,\ldots,m$.
  \end{itemize}
\end{definition}

\begin{definition}
  A walk $P=(x_1,x_2,\ldots,x_m)$ in an ordered decomposition is canonical if
  \begin{itemize}
    \item $P$ is semi-canonical; and
    \item for any semi-canonical $P'=(x'_1,x'_2,\ldots,x'_m)$ isomorphic (under cyclic permutation of the edges in the path or reversal of orientation) to $P$, either $|x_2| < |x'_2|$ or $|x_2| = |x'_2|$ and $x_2 > 0$.
  \end{itemize}
\end{definition}

This definition of canonical simply says that we always start on the arc with lowest given value, and use said arc in a forwards direction.
If there are two or three such choices, we take the arc which results in the second arc in the walk having lowest value.
If this still leaves us with two choices, we take the walk where we use said second arc in the ``forwards'' direction.
Since there is exactly one internal arc between any two external arcs, we are guaranteed a unique choice at this stage.
\begin{definition}
  Given two walks $P_x=(x_1,\ldots,x_k)$ and $P_y=(y_1,\ldots,y_m)$ in canonical form, we say that $P_x < P_y$ if and only if
  \begin{itemize}
    \item $x_i = y_i$ for $i=1,\ldots,n-1$ and $x_n < y_n$; or
    \item $x_i = y_i$ for $i=1,\ldots,k$ and $k < m$.
  \end{itemize}
\end{definition}

In plainer terms, pairs of arcs from each walk are compared in turn until
one arc index is smaller in absolute value than the other, or
until the end of one walk is reached in which case the shorter walk 
is considered ``smaller''.

\begin{definition}
  A manifold decomposition consisting of walks
  $P_1,P_2,\ldots,P_m$ is considered canonical if:
  \begin{itemize}
    \item $P_i$ is canonical for $i=1,\ldots,m$; and
    \item $P_i< P_{i+1}$ for $i=1,\ldots,m-1$.
  \end{itemize}
\end{definition}

Recall that we may have automorphisms of the underlying face pairing graph to consider.
Each automorphism will relabel the arcs of the labelled fattened face pairing graph.
Each relabelling changes any manifold decomposition by renumbering the arcs in the walks.
We apply each automorphism to a manifold decomposition $\mathcal{D}$ to obtain a new decomposition $\mathcal{D}'$.
Then $\mathcal{D}'$ is made canonical itself (by setting the first external arc in each walk and reordering the walks), and we compare $\mathcal{D}$ and $\mathcal{D'}$.
If $\mathcal{D}' < \mathcal{D}$ then we can discard $\mathcal{D}$ and prune the search tree.

There are two points in the algorithm where we might test for canonical decompositions.
\begin{improvement}\label{impro:CanonEveryArc}
  Every time an external arc is added to a walk, check if the current decomposition is canonical.
  If not, disregard this choice of arc and prune the search tree.
\end{improvement}

\begin{improvement}\label{impro:CanonWalks}
  Every time a walk is completed, check if the current decomposition is canonical.
  If not, disregard this choice of arc and prune the search tree.
\end{improvement}

Unfortunately, checking if a (possibly partial) decomposition is canonical is not computationally cheap.
Experimental results showed that using Improvement \ref{impro:CanonWalks} was significantly faster than using Improvement \ref{impro:CanonEveryArc} as fewer checks for canonicity were made.

\section{Results and Timing}\label{sec:results}

In this section we detail the results from testing the algorithm. We test the
manifold decomposition algorithm and its improvements
from Section \ref{sec:existing-results} against the existing implementation
in {\em Regina}.

{\em Regina} is a suite of topological software and includes state of the art
algorithms for census enumerations in various settings, including non-orientable
and hyperbolic manifolds \cite{burton11-genus,Burton2014Cusped}.
{\em Regina} and its source code are freely available, which facilitates
comparable implementations and fair testing.
{\em Regina} also filters out invalid triangulations as a final stage, which
allows us to test the efficiency of our various improvements by enabling or
disabling them independently.
Like other census algorithms in the literature, {\em Regina} builds triangulations
using the traditional framework by identifying faces two at a time.

We find that while {\em Regina} outperforms our new algorithms overall,
there are non-trivial subcases for which our algorithm runs an order of magnitude faster.
Importantly, in a typical census on 10 tetrahedra, {\em Regina} spends almost half of its running
time on precisely these subcases.
This shows that our new algorithm has an important role to play:
it complements the existing framework by providing a means to remove
some of its most severe bottlenecks.
Section~\ref{sec:graphtests} discusses these cases in more detail.

These observations are, however, in retrospect: what we do not have is a clear
indicator in advance for which algorithm will perform best for any given subcase.

Recall that a full census enumeration involves generating all 4-regular multigraphs, and
then for each such graph $G$, enumerating triangulations with face pairing graph $G$.
In earlier sections we only dealt with
individual graphs, but for the tests here we ran each algorithm on all 4-regular
multigraphs of a given order $n$.

In the following results,
we use the term MD to denote our basic algorithm, using improvements \ref{impro:4x_arcs_remaining}, \ref{impro:1vtx} and \ref{impro:CanonWalks}.
For enumerating orientable manifolds only, we also use Improvement \ref{impro:no_reverse} and denote the corresponding algorithm as MD-o.
Experimentation indicated that Improvement~\ref{impro:1vtx} was computationally expensive,
and so we also tested algorithm MD* (using only Improvements \ref{impro:4x_arcs_remaining} and \ref{impro:CanonWalks}) and algorithm MD*-o (using Improvements \ref{impro:4x_arcs_remaining} \ref{impro:no_reverse} and \ref{impro:CanonWalks}).
Note that these last two algorithms may find ordered decompositions which are
not necessarily manifold decompositions, but we can easily filter these out once
the enumeration is complete.

The algorithms were tested on a cluster of Intel Xeon L5520s running at 2.27GHz.
Times given are total CPU time; that is, a measure of how long the test would
take single-threaded on one core. The algorithms themselves, when run on all 4-regular multigraphs on $n$ nodes, are trivially parallelisable which allows each census to complete much faster by taking advantage of available hardware.

We note that, as expected, the census results are consistent between the
old and new algorithms.

\subsection{Aggregate tests}
In the general setting (where we allow orientable and non-orientable triangulations alike) Table \ref{tab:all_mfolds} highlights that {\em Regina} outperforms MD
when summed over all face pairing graphs. 
The difference seems to grow slightly as $n$ increases, pointing to the possibility that more optimisations in this setting are possible.

We suspect that tracking the orientability of vertex links is giving {\em Regina} an advantage here (see \cite{Burton2007}, Section 5).
Tracking orientability more difficult with ordered decompositions, as the walks
are built up one at a time---each external arc represents an identification of edges, but does not specify the orientation of this identification.
Thus orientability cannot be tested until at least two of any three parallel external arcs are used in walks.

\begin{table}[h]
  \centering
  \caption{Running time of {\em Regina} and the manifold decomposition (MD)
algorithms when searching for manifold decompositions on $n$ tetrahedra.}
\subfloat[][Running times (in seconds) for the general setting.]{
\begin{tabular}{|r|r|r|}\hline
  $n$ & {\em Regina} & MD \\ \hline \hline
  7 & 29 & 80 \\
  8 & 491 & 2453 \\
  9 & 11\,288 & 79\,685 \\
  10 & 323\,530 & 3\,406\,211 \\
  \hline
\end{tabular}
\label{tab:all_mfolds}
}
\qquad \qquad \qquad
\subfloat[][Running times (in seconds) for the orientable setting]{
\begin{tabular}{|r|r|r|}\hline
  $n$ & {\em Regina} & MD-o \\ \hline \hline
  7 & $<1$ & 25 \\
  8 & 147 & 535 \\
  9 & 3\,499 & 13\,161 \\
  10 & 90\,969 & 430\,162 \\
  \hline
\end{tabular}
\label{tab:orientable}
}
\end{table}

We also compare MD-o to {\em Regina}, where we ask both algorithms to only search for orientable triangulations.
Both algorithms run significantly faster than in the general setting
(demonstrating that Improvement \ref{impro:no_reverse} is a significant improvement).
Table \ref{tab:orientable} shows that {\em Regina} outperforms MD-o roughly by a factor of four.
This appears to be constant, and here we expect MD to be comparable to {\em
Regina} after more careful optimisation (such as {\em Regina}'s own algorithm
has enjoyed over the past 13~years \cite{Burton2004,Burton2007}).

To test Improvement \ref{impro:1vtx} (the one-vertex test), we compare MD* and MD*-o against MD and MD-o respectively.
The timing data in Tables \ref{tab:md_star_or} and \ref{tab:md_star} shows that MD* and MD*-o
out-performed MD and MD-o, demonstrating that Improvement \ref{impro:1vtx} actually slows down the algorithm. 
We verified that Improvement \ref{impro:1vtx} is indeed discarding unwanted
triangulations---the problem is that tracking the vertex links is too computationally expensive.
Algorithms MD* and MD*-o instead enumerate these unwanted triangulations and
test for one vertex after the fact, discarding multiple vertex triangulations
after they have been explicitly constructed.
The cost of this is included in the timing results, which confirms that such an
``after the fact'' verification process is indeed faster than the losses incurred by Improvement~\ref{impro:1vtx}.
\begin{table}[h]
  \centering
  \caption{Running times of MD, MD*, MD-o, MD*-o when searching for manifold decompositions on $n$ tetrahedra.}
  \subfloat[][Running times (in seconds) for the general setting.]{
\begin{tabular}{|r|r|r|}\hline
  $n$ & MD & MD* \\ \hline \hline
  7 & 80 & 71 \\
  8 & 2\,453 & 1\,875 \\
  9 & 79\,685 & 58\,743 \\
  10 & 3\,406\,211 & 1\,624\,025 \\
  \hline
\end{tabular}
\label{tab:md_star}
}
\qquad \qquad \qquad
  \subfloat[][Running times (in seconds) for the orientable setting.]{
\begin{tabular}{|r|r|r|}\hline
  $n$ & MD-o & MD*-o \\ \hline \hline
  7 & 25 & 16 \\
  8 & 535 & 446 \\
  9 & 13\,161 & 10\,753 \\
  10 & 430\,162 & 291\,544 \\
  \hline
\end{tabular}
\label{tab:md_star_or}
}
\end{table}

\subsection{Individual graph tests} \label{sec:graphtests}

It is on individual (and often pathological) face pairing graphs
that the new algorithm shines.
Recall that the census enumeration problem requires running an enumeration algorithm on all connected 4-regular multigraphs of a given order.
Table \ref{tab:md_graphs} shows the running time of both {\em Regina} and MD* on a cherry-picked sample of such graphs on 10 tetrahedra.

From these we can see that on some particular graphs, MD* outperforms {\em Regina} by an order of magnitude.
While these graphs were cherry-picked, they do display the shortfalls of {\em Regina}.
There are 48432 4-regular multigraphs on 10 nodes, and it takes {\em Regina} 89.9 CPU-hours to complete this census.
Of these 48432 graphs, 48242 are processed in under 300 seconds each. In
contrast, it takes {\em Regina} 43.6 CPU-hours to process these remaining 190 graphs.
This accounts for 48.5\% of the running time of {\em Regina}'s census on 10 tetrahedra triangulations.

Running these ``pathological'' graphs through MD takes 12.1 CPU-hours, for a saving of 31.5 CPU-hours.
This would reduce the running time of the complete census from 89 hours to 58 hours, a 35\% improvement.

If we find the ideal heuristic which tells us exactly which of {\em Regina} or
MD* will be faster on a given graph, we could always just use the faster algorithm.
This would save 40 hours of computing time for the
10 tetrahedra census, which would turn the running time from 90 CPU-hours down
to 50 CPU-hours, a 44\% improvement.
Further work in this area involves identifying exactly which heuristics and
graph metrics can be used to determine whether {\em Regina} or MD will analyse a given graph faster.

\begin{table}[h]
  \centering
  \caption{Running time in seconds of MD$^*$ and {\em Regina} on particular graphs on 10 nodes. Here ``Task'' identifies the specific graph as being the $i$-th graph produced by {\em Regina}.}
\begin{tabular}{|c|c|c|}\hline
  Task & {\em Regina} & MD$^*$ \\ \hline \hline
  48\,308 & 2476 & 142 \\
  48\,083 & 2487 & 192 \\
  48\,288 & 2164 & 118 \\
  47\,332 & 2141 & 229 \\
  47\,333 & 2003 & 134 \\
  47\,520 & 2083 & 221 \\
  46\,914 & 2108 & 302 \\
  \hline
\end{tabular}
\label{tab:md_graphs}
\end{table}

\bibliography{bib}

\end{document}